\documentclass{amsart}
\usepackage{amsmath,amsthm}
\usepackage{amsfonts,amssymb}
\usepackage{enumerate}
\usepackage{accents,color}
\usepackage{graphicx}

\hfuzz1pc

\addtolength{\textwidth}{0.5cm}

\newcommand{\lvt}{\left|\kern-1.35pt\left|\kern-1.3pt\left|}
\newcommand{\rvt}{\right|\kern-1.3pt\right|\kern-1.35pt\right|}

\newtheorem{thm}{Theorem}

\newtheorem{lem}{Lemma}

\newtheorem{prob}{Problem}
\newtheorem{THEO}{Theorem}

\newtheorem{LEM}{Lemma}

\theoremstyle{remark}

\makeatletter
\newcommand{\bddots}{%
  \mathinner{\mkern1mu\raise\p@\vbox{\kern7\p@\hbox{.}}\mkern2mu
    \raise4\p@\hbox{.}\mkern2mu\raise7\p@\hbox{.}\mkern1mu}}
\makeatother

 \def \to {\rightarrow}

\def \N {\mathbb{N}}

\def \R {\mathbb{R}}
\def \C {\mathbb{C}}

\graphicspath{{./}}

\begin{document}

\title[Nyman-Beurling and B\'aez-Duarte criteria]
{An extremal problem related to generalizations  of the Nyman-Beurling and B\'aez-Duarte criteria}

\author{Dimitar K. Dimitrov}
\address{Departamento de Matem\'atica Aplicada\\
 IBILCE, Universidade Estadual Paulista\\
 15054-000 Sa\~{o} Jos\'e do Rio Preto, SP, Brazil.}
 \email{dimitrov@ibilce.unesp.br}
\author{Willian D. Oliveira}
\address{Departamento de Matem\'atica Aplicada\\
 IBILCE, Universidade Estadual Paulista\\
 15054-000 Sa\~{o} Jos\'e do Rio Preto, SP, Brazil.}
 \email{wdoliveira@ibilce.unesp.br}
 
 \thanks{Research supported by by the Brazilian foundations CNPq under
Grant 307183/2013-0 and FAPESP under Grants 2016/09906-0 and 2013/14881-9.}

\keywords{Nyman-Beurling criterion, B\'aez-Duarte criterion, Dirichlet $L$-function, extremal problem, orthogonal Dirichlet polynomials}
\subjclass[2010]{11M06, 11M26}

\begin{abstract} We establish generalizations of the Nyman-Beurling and B\'aez-Duarte criteria concerning lack of zeros of Dirichlet $L$-functions 
in the semi-plane $\Re(s) >1/p$ for $p\in (1,2]$.  We pose and solve a natural extremal problem for Dirichlet polynomials which take values one 
at the zeros of the corresponding $L$-function on the vertical line $\Re(s)=1/p$.  
\end{abstract}

\maketitle

\section{Introduction and statement of results}
\setcounter{equation}{0}

Let $C$ be the space of functions $h:(0,1) \mapsto \C$ of the form
\begin{equation}
\label{C}
h(x)=\sum_{k=1}^{n}b_k \left\{\frac{1}{\theta_kx}\right\}, \ \ \ \ \ \theta_k \geq1, \ \ \ \ \ n\in \mathbb{N},
\end{equation}
where $\{x\}=x-[x]$ denotes the fractional part of $x$ and the constants $b_k\in \C$ obey the restriction $\sum b_k/\theta_k=0$. 
Let $C^p$ be the closure of $C$ in $L^p(0,1)$. The following classical result is due to Beurling \cite{Beu55}:
\begin{THEO}
\label{NB}
The Riemann zeta function $\zeta(s)$ does not vanish in the semi-plane  $\Re(s)>1/p$ if and only if  $C^p=L^p (0, 1)$.
\end{THEO}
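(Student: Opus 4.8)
The plan is to prove Beurling's theorem (Theorem~\ref{NB}) by relating membership of the indicator function $\chi_{(0,1)}$ (equivalently the question of whether $C^p$ exhausts $L^p(0,1)$) to the nonvanishing of $\zeta(s)$ in $\Re(s)>1/p$ via the Mellin transform. First I would record the elementary but crucial identity that the Mellin transform of $\{1/x\}$ on $(0,1)$ is, up to an explicit elementary factor, $-\zeta(s)/s$, valid in the strip $0<\Re(s)<1$; more precisely, for a dilate one computes $\int_0^1 \{1/(\theta x)\}\, x^{s-1}\, dx$ and obtains a factor $\theta^{-s}$ times $-\zeta(s)/s$ plus a term handled by the normalization $\sum b_k/\theta_k=0$. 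Consequently the Mellin transform of a generic $h\in C$ as in \eqref{C} is
\begin{equation*}
\widehat{h}(s) \;=\; -\frac{\zeta(s)}{s}\sum_{k=1}^{n} \frac{b_k}{\theta_k^{\,s}},
\end{equation*}
so the Mellin image of $C$ consists of $-\zeta(s)/s$ multiplied by the Dirichlet polynomials $\sum b_k \theta_k^{-s}$ subject to the linear constraint that they vanish at $s=1$ (this is exactly $\sum b_k/\theta_k=0$).

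Next I would set up the duality. For $p\in(1,2]$ let $q$ be the conjugate exponent, $q\geq 2$. By the Hahn--Banach theorem, $C^p=L^p(0,1)$ if and only if no nonzero $g\in L^q(0,1)$ annihilates $C$, i.e. $\int_0^1 h(x)\overline{g(x)}\,dx=0$ for all $h\in C$ forces $g=0$. Testing against the dilated fractional parts $\{1/(\theta x)\}$ and using the Mellin/Plancherel correspondence, the annihilation condition transfers to a statement about the Mellin transform $G$ of $g$ along the vertical line $\Re(s)=1/p$: namely $G$ must be orthogonal (in the appropriate sense) to every $\theta^{-s}$, $\theta\geq 1$, after division by the factor coming from $\zeta(s)/s$. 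The constraint $\sum b_k/\theta_k=0$ is what makes the boundary term vanish and keeps everything inside $L^q$; it also means we only get information about $G$ modulo the single function corresponding to $\theta=1$ without the constraint — I would need to track this carefully. The upshot I want is: $C^p\neq L^p$ if and only if there is a nonzero function, holomorphic and of controlled growth in the half-plane $\Re(s)>1/p$, that restricts to $G(s)\cdot s/\zeta(s)$-type data on the line and is ``small'' — which, by a Paley--Wiener / Hardy-space argument, can exist precisely when $\zeta(s)/s$ has a zero in $\Re(s)>1/p$, allowing one to cancel poles and produce a bounded nonzero annihilator.

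The key steps in order: (1) establish the Mellin identity for $\{1/(\theta x)\}$ and hence compute $\widehat{h}$; (2) translate the closure question into the nonexistence of an $L^q$-annihilator via Hahn--Banach; (3) use Mellin--Plancherel on the line $\Re(s)=1/p$ to reformulate annihilation as: $G(s)s/\zeta(s)$ extends to a function in the Hardy space of the half-plane $\Re(s)>1/p$ that is orthogonal to all $\theta^{-s}$; (4) show that the span of $\{\theta^{-s}:\theta\geq 1\}$ is dense in the relevant Hardy space \emph{unless} the multiplier $s/\zeta(s)$ introduces, or fails to cancel, a singularity — i.e. unless $\zeta$ vanishes in the half-plane; (5) conclude the equivalence in both directions, using on one side that a zero $\rho$ of $\zeta$ with $\Re(\rho)>1/p$ yields the explicit annihilator associated to $x^{\rho-1}$-type data (so $C^p\subsetneq L^p$), and on the other side that nonvanishing forces density.

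The hard part will be step (4) together with the precise growth/Hardy-space bookkeeping in step (3): one must control the behaviour of $1/\zeta(s)$ near the line $\Re(s)=1/p$ and at infinity so that the division is legitimate in $L^q$-sense, and one must prove the density of the dilates $\{\theta^{-s}\}_{\theta\geq1}$ in the appropriate space — this is the analytic heart, essentially a Wiener-type Tauberian / completeness statement for the multiplicative group, and it is where the interplay between the exponent $p$, the line $\Re(s)=1/p$, and the location of zeros of $\zeta$ is genuinely used. The constraint $\sum b_k/\theta_k=0$, which removes the pole of $\zeta$ at $s=1$, is the technical device that makes $p$ allowed to range over $(1,2]$ rather than forcing $p<1$, and I would make sure it is invoked exactly where the boundary term at $s=1$ would otherwise obstruct the argument.
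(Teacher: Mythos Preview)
Your setup through step~(2) matches the paper's proof of the generalisation (Theorem~\ref{GNB}, of which Theorem~\ref{NB} is the case $\eta=\zeta$): the Mellin identity \eqref{Int1f} gives $\widehat h(s)=-\zeta(s)\,s^{-1}\sum b_k\theta_k^{-s}$, and duality reduces the density question to the (non)existence of a nonzero annihilator $g\in L^q(0,1)$. The direction ``zero $\Rightarrow$ $C^p\neq L^p$'' is also fine; the paper does it by applying H\"older to \eqref{Int2f}, which is equivalent to exhibiting the explicit annihilator $x^{\bar\rho-1}\in L^q$.

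The hard direction is where your plan has a genuine gap. Step~(3) invokes ``Mellin--Plancherel on the line $\Re(s)=1/p$'', but Plancherel is an $L^2$ isometry; for $p<2$ the annihilator $g$ lives in $L^q$ with $q>2$, and there is no isometric transform carrying $L^q(0,1)$ to a tractable function space on that line, so the division by $\zeta(s)/s$ and the Hardy-space membership you need are not justified. More seriously, step~(4) --- density of $\{\theta^{-s}:\theta\ge1\}$ in the relevant Hardy space after multiplication by $\zeta(s)/s$ --- is the entire content of the theorem; calling it ``a Wiener-type completeness statement'' without a proof is circular. (For $p=2$ one can in principle push this through via Beurling--Lax and inner/outer factorisation, by showing that $\zeta(s)/s$ is outer on the half-plane exactly when it has no zeros there, but that is itself nontrivial and you have not indicated it; for $p\neq 2$ the $H^p$ machinery is not available in the form you need.)

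The paper's route is different and sidesteps all of this. It invokes Beurling's 1953 result (Lemma~\ref{lema02}): from any nonzero $g\in L^q(0,1)$ with $\int_0^a|g|>0$ for every $a\in(0,1)$ --- a condition verified directly from $g\perp C_\eta$ --- one extracts a power $x^\lambda$, with $\Re\lambda>-1/q$, lying in the $L^1$-closure of the dilates $\{g(\gamma x):\gamma\in(0,1]\}$. Since $C_\eta$ is invariant under the contractions $T_\gamma$ of Lemma~\ref{lema01}, the annihilation passes from $g$ to $x^\lambda$, and then \eqref{Int1f} immediately gives $\zeta(\lambda+1)=0$ with $\Re(\lambda+1)>1/p$. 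This argument is purely real-variable, works uniformly for all $p>1$, and concentrates the analytic depth in Lemma~\ref{lema02} rather than in any Hardy-space density statement.
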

Since the result was proved first by Nyman \cite{Nym} for $p=2$ in 1950 and generalised by Beurling \cite{Beu55} for $p>1$ in 1955 
it is nowadays commonly known as the Nyman-Beurling criterion. 
Bercovici and Foias \cite{BerFoi} establish the case $p=1$ in 1984 while it obviously does not hold for $p>2$. Theorem \ref{NB} has attracted the attention because of its importance to the study of distribution of zeros 
of the Riemann zeta function. We refer to the more recent contributions \cite{B-D93, B-D99, B-D03, B-D05, BBLS, Bag, BS, BalRot, BetConFar, Bur, DFMR13, DFMR13II, Rot05, Rot07, Rot09, Nik, Vas}. For $p=2$ 
the above theorem provides a criterion for the Riemann hypothesis (RH). 

In a sequence of papers B\'aez-Duarte \cite{B-D93, B-D99, B-D03, B-D05}, also in collaboration with Balazard, Landreau and  Saias \cite{BBLS}, obtained 
various improvements of the Nyman-Beurling criterion. In particular, in \cite{B-D03} B\'aez-Duarte 
showed that in the most important case $p=2$ the conditions $\theta_k\geq1$ in (\ref{C}) can be substituted by $\theta_k\in \N$ and the restriction $\sum b_k/\theta_k=0$ can be removed. B\'aez-Duarte's 
contribution implies the following beautiful criterion for the Riemann hypothesis in terms of approximation of the characteristic function $\textbf{1}_{(0,1)}$ of the interval $(0,1)$: 
\begin{THEO}
\label{B}
The RH holds if and only if 
$\lim_{n \to \infty}d_n=0$, where
$$
d_n^2=\inf_{{b_1,\ldots,b_n\in \C}\atop}\int_0^\infty \left|\textbf{1}_{(0,1)}-\sum_{k=1}^n b_k \left\{\frac{1}{kx}\right\}\right|^2 dx.
$$
\end{THEO}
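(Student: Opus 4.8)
The plan is to transport the extremal problem onto the critical line by the Mellin transform: there the condition $d_n\to0$ becomes an approximation statement for which one implication is a soft boundedness argument and the other, the analytically substantial one, is handled with the help of \thmref{NB}. Writing $x=e^{-u}$ identifies $L^2(0,\infty)$ isometrically, up to the constant $2\pi$, with $L^2$ of the line $\Re(s)=1/2$ via the Mellin transform $f\mapsto\widetilde f(s):=\int_0^\infty f(x)x^{s-1}\,dx$. A short computation --- substituting $y=kx$ and $v=1/y$ and using the classical formula $\zeta(s)=s/(s-1)-s\int_1^\infty\{v\}v^{-s-1}\,dv$ --- gives
\[
\widetilde{\textbf{1}_{(0,1)}}(s)=\frac1s\ \ (\Re(s)>0),\qquad
\widetilde{\{1/(kx)\}}(s)=-\frac{\zeta(s)}{s}\,k^{-s}\ \ (0<\Re(s)<1).
\]
Hence, with $s=\tfrac12+it$ and $B$ running over Dirichlet polynomials $B(s)=\sum_{k=1}^{n}b_k k^{-s}$,
\[
d_n^2=\frac1{2\pi}\,\inf_{B}\int_{-\infty}^{\infty}\frac{\bigl|\,1+\zeta(s)B(s)\,\bigr|^2}{|s|^2}\,dt .
\]
As the spaces $\mathrm{span}\{\{1/(kx)\}:k\le n\}$ increase with $n$, the sequence $d_n$ is non-increasing, and $d_n\to0$ is equivalent to the statement that $\textbf{1}_{(0,1)}$ lies in the $L^2(0,\infty)$-closure $\overline{\CB}$ of $\CB:=\mathrm{span}\{\{1/(kx)\}:k\in\N\}$; equivalently, that $-1$ lies in the closure of $\{\zeta(s)B(s):B\ \text{a Dirichlet polynomial}\}$ in $L^2(\R,|s|^{-2}\,dt)$.

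For the implication $d_n\to0\Rightarrow$ RH I would argue by contradiction. Assume $\zeta(\rho)=0$ with $\beta:=\Re(\rho)>1/2$, and consider on $\CB$ the linear functional $\Lambda(h):=\widetilde h(\rho)$; by the transform formula $\Lambda(h)=-\zeta(\rho)B(\rho)/\rho=0$ for every $h=\sum_k b_k\{1/(kx)\}\in\CB$. The crucial point is that $\Lambda$ is bounded for the $L^2(0,\infty)$-norm on $\CB$: on $(1,\infty)$ one has $\{1/(kx)\}=1/(kx)$, so $h(x)=x^{-1}\sum_k b_k/k$ there, whence $\bigl|\int_1^\infty h(x)x^{\rho-1}dx\bigr|\le\|h\|_{L^2(1,\infty)}/|1-\rho|$, while the Cauchy--Schwarz inequality gives $\bigl|\int_0^1 h(x)x^{\rho-1}dx\bigr|\le\|h\|_{L^2(0,1)}/\sqrt{2\beta-1}$ (the hypothesis $\beta>1/2$ enters precisely here). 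Hence $\Lambda$ extends to a bounded functional on $\overline{\CB}$, still identically zero, and the same two estimates show that this extension sends $\textbf{1}_{(0,1)}$ to $\widetilde{\textbf{1}_{(0,1)}}(\rho)=1/\rho$. If $d_n\to0$, then $\textbf{1}_{(0,1)}\in\overline{\CB}$, forcing $1/\rho=0$, which is absurd; since zeros in the critical strip occur in pairs $\rho,1-\rho$, $\zeta$ has no zeros off the line $\Re(s)=1/2$, i.e.\ RH holds.

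The converse, RH $\Rightarrow d_n\to0$, is the harder half and is where I expect the real obstacle to lie. One route uses \thmref{NB} with $p=2$: RH yields $C^2=L^2(0,1)$, hence $\textbf{1}_{(0,1)}\in C^2$, so $\textbf{1}_{(0,1)}$ is approximable in $L^2(0,1)$ by functions $\sum_k b_k\{1/(\theta_k x)\}$ with $\theta_k\ge1$ and $\sum_k b_k/\theta_k=0$. If moreover the dilations are integers, such a function vanishes on $(1,\infty)$ (there it equals $x^{-1}\sum_k b_k/\theta_k$), so the $L^2(0,1)$-approximation is automatically an $L^2(0,\infty)$-approximation and $d_n\to0$ follows, the constraint $\sum_k b_k/\theta_k=0$ being irrelevant to the definition of $d_n$. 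The delicate point is thus B\'aez-Duarte's observation that in \thmref{NB} (for $p=2$) the dilations $\theta_k\ge1$ may be restricted to positive integers without enlarging the closure. Alternatively one can stay on the critical line and argue directly: under RH the function $1/\zeta(s)=\sum_k\mu(k)k^{-s}$ extends holomorphically into $\Re(s)>1/2$ and is well approximated by truncations of its Dirichlet series, and a standard mollifier estimate, conditional on RH, shows that the choices $B_N(s)=-\sum_{k\le N}\mu(k)\bigl(1-\tfrac{\log k}{\log N}\bigr)k^{-s}$ satisfy $\int_{\R}|1+\zeta(s)B_N(s)|^2|s|^{-2}\,dt\to0$, the needed input being the conditional bounds $\sum_{n\le x}\mu(n)\ll_{\ve}x^{1/2+\ve}$ and $\zeta(\tfrac12+it)\ll_{\ve}|t|^{\ve}$ together with Parseval's identity for Dirichlet series; by the reformulation above this is exactly $d_N\to0$ (indeed with a rate $d_N^2\ll(\log N)^{-1}$). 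In both approaches, controlling the partial sums of $1/\zeta$ on the critical line under RH is the heart of the matter, the remaining steps being the routine Mellin bookkeeping indicated above.
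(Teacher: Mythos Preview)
Your treatment of the implication $d_n\to0\Rightarrow$ RH is correct and in fact more direct than what the paper does. The paper recovers Theorem~B as the case $p=2$, $\chi$ trivial of its Theorem~\ref{GBNew}; for this implication it first shows that an unconstrained approximant can be modified to satisfy $\sum_k b_k/k^\alpha=0$ and then invokes the generalised Nyman--Beurling criterion (Theorem~\ref{GNB}). Your bounded-functional argument at a putative zero $\rho$ bypasses that detour entirely.

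The converse is where your proposal has a genuine gap. Route~(a) is circular: the ``observation'' that in Theorem~\ref{NB} the dilations $\theta_k\ge1$ may be restricted to integers (and the constraint dropped) without shrinking the closure \emph{is} B\'aez-Duarte's contribution, i.e.\ the substance of Theorem~B. You are invoking the theorem to prove itself. Route~(b) is not a proof either: the assertion that $\int_\R|1+\zeta V_N|^2|s|^{-2}\,dt\to0$ under RH alone is not the ``standard mollifier estimate'' you describe. The Bettin--Conrey--Farmer asymptotic quoted in the introduction requires, in addition to RH, a bound on $\sum_{|\Im\rho|\le T}|\zeta'(\rho)|^{-2}$, and the conditional inputs you list ($M(x)\ll x^{1/2+\ve}$, Lindel\"of) do not by themselves yield the claimed $L^2$ convergence on the critical line.

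What the paper actually does for RH $\Rightarrow d_n\to0$ (following Bagchi, inside the proof of Theorem~\ref{GBNew}) is a two-parameter regularisation on the Mellin side. With $G_k(s)=(k^{-s}-k^{-1})\zeta(s)/s$ one forms $H_{n,\ve}(s)=\sum_{k\le n}\mu(k)k^{-\ve}G_k(s)$. The point of the extra factor $k^{-\ve}$ is that the partial sums $\sum_{k\le n}\mu(k)k^{-s-\ve}$ live on the line $\Re s=1/2+\ve$, strictly to the right of $1/2$, where the Littlewood-type bound of Theorem~\ref{Littl_Est} (and its consequence Lemma~\ref{lema00}) gives a uniform-in-$n$ majorant; together with Lemma~\ref{lem2} and dominated convergence this yields $H_{n,\ve}\to H_\ve$ in $L^2(\Re s=1/2)$. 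One then lets $\ve\to0$: the pole of $\zeta$ at $1$ makes $H_\ve(s)\to1/s$ pointwise, and a uniform-in-$\ve$ bound $|H_\ve(s)|=O(|t|^{-3/4})$ is extracted from the Hadamard product (Theorem~\ref{ThIw}) via Stirling. This $k^{-\ve}$ damping, allowing one to exploit estimates strictly inside the zero-free half-plane while still landing in $L^2$ of the critical line, is the idea missing from your sketch.
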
 

The beauty of the latter statement is that the above extremal problem is nothing but a problem about the best approximation of $\textbf{1}_{(0,1)}$ in a 
Hilbert space in terms of elements from a finite dimensional subspace and the solution of every such a problem is given by the projection. Indeed, $d_n$ 
is the distance in $L^2(0,\infty)$ from $\textbf{1}_{(0,1)}$ to the $n$-dimensional space $\mathrm{span} \{\rho_k(x):k=1,\ldots,n\}$, where $\rho_k(x)=\{1/kx\}$. It is well known that
$$
d_n^2 =  \frac{\mathrm{det}\ G(\rho_1,\ldots ,\rho_n, \textbf{1}_{(0,1)})}{\mathrm{det}\ G(\rho_1,\ldots ,\rho_n)},
$$
where $G(\rho_1,\ldots ,\rho_n, \textbf{1}_{(0,1)})$ and $G(\rho_1,\ldots ,\rho_n)$ are the Gram matrices of the corresponding functions 
and the inner product is defined by 
$$
(g,h) =  \int _0^\infty g(x)\, \overline{h(x)}\, dt.
$$
The B\'aez-Duarte useful version, including the one about $L$-functions in the Selberg class, due to de Roton \cite{Rot09},
have been proved only for $p=2$. We fill this gap, establishing a generalisation for Dirichlet $L$-functions, for every $p\in (1,2]$. In order to formulate our results, let $L(s,\chi)$ be a Dirichlet $L$-function with a character modulo $q$ and $p\in(1,2]$. Define the function
$$
\kappa(x)=\beta\, x^\alpha-\sum_{k\leq x}\chi(k)\, k^{1/2-1/p},
$$
where  $\alpha=3/2-1/p$ and $\beta= \varphi(q)/(\alpha q)$ if  $\chi$ is principal and $\alpha=\beta=0$ if  $\chi$ is non-principal character. We prove:
\begin{thm}
\label{GBNew}  
Let $p\in (1,2]$ and $L(s,\chi)$ be a Dirichlet $L$-function. Then $L(s,\chi)$ does not vanish for $\Re s>1/p$ if and only if 
$\lim_{n \to \infty}d_n(L,p)=0$, where
$$
d_n^2(L,p)=\inf_{{b_1,\ldots,b_n\in \C}\atop}\int_0^\infty \left|\textbf{1}_{(0,1)}-\sum_{k=1}^n b_k\, \kappa \left(\frac{1}{kx}\right) \right|^2 dx.
$$
 \end{thm}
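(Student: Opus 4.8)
The plan is to reduce Theorem~\ref{GBNew} to the generalized Nyman–Beurling criterion (Theorem~\ref{NB}) via a Mellin-transform correspondence, so that the vanishing of $d_n(L,p)$ becomes equivalent to a density statement in $L^p(0,1)$ that is controlled by the zeros of $L(s,\chi)$. First I would compute the Mellin transform of $\kappa(1/(kx))$ on the critical strip. Writing $\rho_k(x) = \kappa(1/(kx))$, a change of variables reduces the Mellin transform of $\rho_k$ to that of $\kappa(1/x)$ times $k^{-s}$, and the Mellin transform of $\kappa(1/x)$ should, after the Abel-summation built into the definition of $\kappa$ (the subtracted partial sum $\sum_{k\le x}\chi(k)k^{1/2-1/p}$ and the smoothing term $\beta x^\alpha$ in the principal case), produce something proportional to $L(s+1/p-1/2,\chi)/(s+1/p-1/2)$ up to entire nonvanishing factors and the pole-cancelling role of $\beta$. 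The key point is that on the vertical line $\Re(s) = 1 - 1/p$ (the image of the $L^2(0,\infty)$ pairing under Mellin), the transform of $\rho_k$ vanishes precisely at the points where $L(\cdot,\chi)$ vanishes on $\Re(s) = 1/p$, which is exactly the obstruction structure in the classical criterion.

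Next I would invoke the Plancherel theorem for the Mellin transform to rewrite $d_n^2(L,p)$ as a weighted $L^2$-distance on the line $\Re(s) = 1-1/p$ between the Mellin transform of $\mathbf{1}_{(0,1)}$, namely $1/s$, and the span of the transforms of the $\rho_k$. Since $p \le 2$, one then needs to pass from this $L^2$ formulation to an $L^p(0,1)$ density statement; here I would follow Beurling's original argument, using that the functions $\kappa(1/(kx))$ with integer $k$ generate, after removing the normalizing constraint $\sum b_k/\theta_k = 0$ (which is why the B\'aez-Duarte normalization with $\theta_k = k \in \mathbb{N}$ is essential and why the subtracted partial sum and the term $\beta x^\alpha$ appear — they make the relevant function lie in the right space without the linear constraint), the same closed subspace of $L^p(0,1)$ as the space $C^p$ from Theorem~\ref{NB}, up to the dilation/restriction dictionary relating $L^p(0,1)$ and $L^p(1,\infty)$. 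Then $d_n(L,p) \to 0$ iff $\mathbf{1}_{(0,1)}$ lies in the closure of $\mathrm{span}\{\rho_k\}$ iff that closure is all of $L^p(0,1)$ (since $\mathbf{1}_{(0,1)}$ is cyclic), which by the $L$-function analogue of Theorem~\ref{NB} is equivalent to $L(s,\chi) \ne 0$ for $\Re(s) > 1/p$.

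The main obstacle I expect is the analytic-number-theory estimate needed to show that $\kappa(1/x)$ (hence each $\rho_k$) genuinely lies in $L^2(0,\infty)$ — equivalently that $\kappa(x) = \beta x^\alpha - \sum_{k\le x}\chi(k)k^{1/2-1/p}$ has controlled growth, of size $O(x^{\alpha - \epsilon})$ or at least square-integrable against the relevant measure after the substitution. For the principal character this requires the pole of $L$ at $s=1$ to be exactly cancelled by the choice $\beta = \varphi(q)/(\alpha q)$, which is a residue computation, together with a subconvexity-free bound (Dirichlet hyperbola / partial summation against $\zeta$) on the error term; for non-principal $\chi$ one uses $\alpha = \beta = 0$ and the boundedness of the character sum. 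A secondary subtlety is justifying the Mellin–Plancherel manipulation when $p < 2$: the transform of $\mathbf{1}_{(0,1)}$ is only in $L^2$ of the critical line in the limiting sense, so the passage between the $L^2(0,\infty)$ extremal quantity $d_n$ and the $L^p(0,1)$ density must be done by the density/duality argument of Beurling rather than by a direct isometry, and one must check that the constraint removal and the auxiliary $x^\alpha$ term do not change the closure. Once these two points are in place, the equivalence with non-vanishing of $L(s,\chi)$ in $\Re(s) > 1/p$ follows from the already-available generalization of the Nyman–Beurling criterion to Dirichlet $L$-functions.
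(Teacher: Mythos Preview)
Your outline has the right ingredients but a genuine gap at the decisive point, and a framework misalignment that obscures it.

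On the framework: the Mellin--Plancherel isometry sends $L^2(0,\infty)$ to $L^2(\Re s=1/2)$, not to a line depending on $p$. The paper handles this by a shift, setting $\eta(s)=L(s+1/p-1/2,\chi)$, so that ``$L$ zero-free in $\Re s>1/p$'' becomes ``$\eta$ zero-free in $\Re s>1/2$''. Your $\kappa$ is then exactly $\kappa_\eta$, and one shows by elementary partial summation that $\kappa_\eta$ is \emph{bounded} (not merely $O(x^{\alpha-\epsilon})$); this places $\eta$ in the class $\mathcal D$ and lets the entire argument run in $L^2$. There is no passage from $L^2$ to $L^p(0,1)$, so your ``secondary subtlety'' never arises.

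The real gap is the assertion that $\mathrm{span}\{\kappa(1/(kx)):k\in\mathbb N\}$ has the same closure as the Nyman--Beurling space $C_\eta$. The Nyman--Beurling criterion (Theorem~\ref{GNB}) uses \emph{arbitrary} dilations $\theta_k\ge1$, whereas $d_n(L,p)$ uses only \emph{integer} dilations; the integer-dilation span is not invariant under the operators $T_\gamma$, so the cyclicity-of-$\mathbf 1_{(0,1)}$ argument does not apply to it, and your claim that the two closures agree is exactly what has to be proved. This is the B\'aez--Duarte strengthening and is the hardest part. The paper carries it out on the Mellin side: assuming $L$ zero-free in $\Re s>1/p$, one proves a conditional Littlewood-type bound $|L(s,\chi)|^{\pm1}\ll|t|^\epsilon$ for $\Re s\ge 1/p+\delta$, deduces convergence of $\sum\mu(k)\chi(k)k^{-s}$ to $1/L(s,\chi)$ there, forms mollified sums $H_{n,\epsilon}(s)=\frac{\eta(s)}{s}\sum_{k\le n}\mu_\eta(k)k^{-s-\epsilon}$ (minus a constant in the principal case), takes the double limit $n\to\infty$ then $\epsilon\to0$, and dominates $H_\epsilon$ uniformly in $\epsilon$ via a Hadamard-product bound on $|\xi_\chi(s)/\xi_\chi(s+\epsilon)|$. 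None of this machinery is supplied by your appeal to ``the already-available generalization of the Nyman--Beurling criterion,'' and without it the forward implication (no zeros $\Rightarrow d_n\to0$) does not go through. The reverse implication is comparatively soft: one adjusts an approximant to satisfy the constraint $\beta\sum b_k k^{-\alpha}=0$ at bounded cost and then invokes Theorem~\ref{GNB}.
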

 Observe that $d_n^2(L,p)$ is well defined. Indeed, for any $k \in \N$ the function $\kappa \left(\frac{1}{kx}\right)$ belongs to $L^2(0, \infty)$ because it vanishes for $x>1$ and 
 we shall prove that the integrand is bounded in $(0,1)$.
 
The relevance of the last results is that it permits the use of the classical tools from the theory of Hilbert spaces in the investigation of the zeros of Dirichlet  $L$-function on semi-planes $\Re(s)>1/p$ for $p\in (1,2]$. 
For instance, our generalization of Theorem \ref{B} allows us to write $d_n^2(L,p)$ as a quotient of determinants. More specifically, 
$$
d_n^2(L,p) =  \frac{\mathrm{det}\ G(\lambda_1,\ldots ,\lambda_n, \textbf{1}_{(0,1)})}{\mathrm{det}\ G(\lambda_1,\ldots ,\lambda_n)},
$$
where $\lambda_k(x)=\kappa(1/kx)$.

With the aid of the Mellin transform we prove that Theorem \ref{GBNew} is equivalent to the following one:
\begin{thm}
\label{GBLp}
Let $p\in (1,2]$ and $L(s,\chi)$ be a Dirichlet $L$-function. Then $L(s,\chi)$ does not vanish for $\Re s>1/p$ if and only if 
$\lim_{n \to \infty}d_n(L,p)=0$, where
$$
d_n^2(L,p)=\inf_{{A_n\in \mathcal{D}_n}\atop} \frac{1}{2\pi} \int_{\Re(s)=1/p}\left| \frac{1-L(s,\chi)A_n(s)}{s} \right|^2 |ds|
$$
and $\mathcal{D}_n$ is the space of ordinary Dirichlet polynomials of the form $\sum_{k=1}^n b_k k^{-s}$.
\end{thm}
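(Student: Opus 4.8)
We shall deduce \thmref{GBLp} from \thmref{GBNew} by showing that the two quantities written $d_n^2(L,p)$ coincide, under the linear change of parameters $A_n(s)=-\sum_{k=1}^n b_k\,k^{1/p-1/2}k^{-s}$, which is a bijection of $\C^n$ onto $\mathcal D_n$; since both theorems assert the same dichotomy for the zeros of $L(s,\chi)$ in $\Re(s)>1/p$, equality of the extremal quantities yields the equivalence at once. The bridge, as announced, is the Mellin transform. Put $g(x)=\textbf{1}_{(0,1)}(x)-\sum_{k=1}^n b_k\,\kappa(1/kx)$, so that \thmref{GBNew} reads $d_n^2(L,p)=\int_0^\infty|g(x)|^2\,dx$; the integrand is bounded near $0$ and is $O(x^{-2\alpha})$ near $\infty$ with $\alpha>1/2$ (as remarked after \thmref{GBNew}), so $g\in L^2(0,\infty)$. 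We then invoke the Mellin--Plancherel theorem in the form: with $\mathcal M f(s)=\int_0^\infty f(x)\,x^{s-1}\,dx$, the map $f\mapsto\mathcal M(x^{1/2-1/p}f)$, restricted to the line $\Re(s)=1/p$, is an isometry of $L^2(0,\infty)$ onto $L^2$ of that line carrying the measure $|ds|/(2\pi)$ --- this is Parseval for the Fourier transform after $x=e^{-u}$ --- so that $d_n^2(L,p)=\frac{1}{2\pi}\int_{\Re(s)=1/p}\big|\mathcal M(x^{1/2-1/p}g)(s)\big|^2\,|ds|$.

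It remains to compute $\mathcal M(x^{1/2-1/p}g)$. The contribution of $\textbf{1}_{(0,1)}$ is an explicit rational function, and the contribution of each $\kappa(1/kx)$ is, after a change of variable, an explicit power of $k$ times the Mellin transform $\mathcal K(\sigma)=\int_0^\infty\kappa(w)\,w^{-\sigma-1}\,dw$ of $\kappa$ itself, evaluated at a $\sigma$ with $\Re(\sigma)=1/2$ when $\Re(s)=1/p$; here the hypothesis $p\le2$ enters, for it makes $\alpha=3/2-1/p>1/2$, so $\Re(\sigma)=1/2$ lies inside the strip $0<\Re(\sigma)<\alpha$ on which the integral defining $\mathcal K$ converges. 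We evaluate $\mathcal K(\sigma)$ by splitting at $w=1$: on $(0,1)$, $\kappa(w)=\beta w^\alpha$ for the principal character $\chi_0$ and $\kappa(w)=0$ for non-principal $\chi$, contributing $\beta/(\alpha-\sigma)$; on $(1,\infty)$, for $\Re(\sigma)>\alpha$ one unfolds the partial sum $\sum_{k\le w}\chi(k)k^{1/2-1/p}$, interchanges summation and integration, and picks up $\beta/(\sigma-\alpha)$ along with $-L(\sigma+1/p-1/2,\chi)/\sigma$, using $\sum_k\chi(k)k^{1/2-1/p-\sigma}=L(\sigma+1/p-1/2,\chi)$. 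The two $\beta$-terms cancel, so $\mathcal K(\sigma)=-L(\sigma+1/p-1/2,\chi)/\sigma$, and one carries this identity from $\Re(\sigma)>\alpha$ down to $\Re(\sigma)=1/2$ by analytic continuation. Substituting back --- and observing that the change of variable turns $\sigma+1/p-1/2$ into $s$ --- one finds that $\mathcal M(x^{1/2-1/p}g)(s)$ collapses to the integrand $\big(1-L(s,\chi)A_n(s)\big)/s$ of \thmref{GBLp} (for $p=2$ this is exactly the B\'aez-Duarte expression of \thmref{B}), and the infimum over $b_1,\dots,b_n\in\C$, equivalently over $A_n\in\mathcal D_n$, completes the identification.

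The crux is the analytic continuation of $\mathcal K$ across the strip. For a non-principal character it is harmless, the relevant $L$-series being entire; for $\chi_0$ one must move the Dirichlet-series evaluation, legitimate only for $\Re(\sigma)>\alpha$, past the singularity at $\sigma=\alpha$, and this succeeds precisely because $\beta=\varphi(q)/(\alpha q)$ equals $\tfrac1\alpha$ times the residue $\varphi(q)/q$ of $L(s,\chi_0)$ at $s=1$, so that the simple pole of $\beta/(\sigma-\alpha)$ exactly cancels the pole of $-L(\sigma+1/p-1/2,\chi_0)/\sigma$ at $\sigma=\alpha$. A secondary and more routine point --- needed for the Mellin--Plancherel step to be meaningful on the line $\Re(s)=1/p$ --- is that $\big(1-L(s,\chi)A_n(s)\big)/s$ does lie in $L^2$ of that line; this follows from a convexity bound for $L(s,\chi)$ on vertical lines together with the decay of $1/s$, and it is also what makes the right-hand side of \thmref{GBLp} finite.
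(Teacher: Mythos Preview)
Your overall strategy---pass from the $L^2(0,\infty)$ formulation of \thmref{GBNew} to the line integral of \thmref{GBLp} via Mellin--Plancherel---is exactly the paper's, and your Mellin computation of $\mathcal K(\sigma)$ is correct, including the pole cancellation at $\sigma=\alpha$. But your central claim, that the two quantities labelled $d_n^2(L,p)$ are \emph{equal}, is wrong for $p\neq 2$, and the error is in the last substitution. With $\sigma=s+1/2-1/p$ you correctly get $L(\sigma+1/p-1/2,\chi)=L(s,\chi)$ in the numerator, and the Dirichlet polynomial is absorbed by your reparametrisation $A_n(s)=-\sum b_k k^{1/p-1/2}k^{-s}$. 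However, the \emph{denominator} of $\mathcal K(\sigma)$ is $\sigma$, not $\sigma+1/p-1/2$; and likewise $\mathcal M(x^{1/2-1/p}\mathbf 1_{(0,1)})(s)=\int_0^1 x^{\sigma-1}\,dx=1/\sigma$, not $1/s$. So what you actually obtain is
\[
\mathcal M(x^{1/2-1/p}g)(s)=\frac{1-L(s,\chi)A_n(s)}{s-1/p+1/2},\qquad \Re(s)=1/p,
\]
and hence, by Plancherel, the $d_n^2(L,p)$ of \thmref{GBNew} equals
\[
\inf_{A_n\in\mathcal D_n}\frac{1}{2\pi}\int_{\Re(s)=1/p}\left|\frac{1-L(s,\chi)A_n(s)}{s-1/p+1/2}\right|^2|ds|,
\]
which differs from the $d_n^2(L,p)$ of \thmref{GBLp} by the bounded factor $|s/(s-1/p+1/2)|^2=|(1/p+it)/(1/2+it)|^2\in(1,4)$ inside the integral. (Your check ``for $p=2$ this is exactly B\'aez--Duarte'' passes precisely because the discrepancy vanishes at $p=2$.)

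The paper does not attempt to prove equality; it proves exactly this comparability. In the paper's notation $d_n^2(\eta,2)$ denotes the Mellin image of the \thmref{GBNew} quantity on $\Re(s)=1/2$, and the proof concludes with the two-sided bound $d_n(L,p)<d_n(\eta,2)<2\,d_n(L,p)$, which is all that is needed since the theorems assert convergence to zero, not a value. Your argument is salvaged by inserting this one observation in place of the (false) equality.
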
 

As we have already mentioned, Theorem 2 was established for $p=2$ for the more general class of Selberg $L$-functions by de Roton \cite{Rot09}. Since our principal aim is to extend the B\'aez-Duarte criterion 
to semi-planes $\Re(s)>1/p$ free of zeros of $L$-functions, we restrict ourselves to Dirichlet $L$-series. As it will become clear in the course of the proofs our choice is due the fact that we are able to deal with the reciprocal 
$1/L(s,\chi)$ when $L(s,\chi)$ is a Dirichlet $L$-series.

Since the quantity $d_n^2(L,p)$ can be defined equivalently by 
\begin{equation}
\label{dnz}
d_n^2(L,p) =\inf_{{A_n\in \mathcal{D}_n}\atop}  \frac{1}{2\pi} \int_{\mathbb{R}} \left| 1-L \left(\frac{1}{p}+it, \chi\right)\, A_n\left(\frac{1}{p}+it\right) \right|^2 \frac{dt}{1/p^2+t^2},
\end{equation}
then the fundamental question arises:
\begin{prob}
\label{P1}
 For any fixed $n\in \mathbb{N}$ and $p\in(1,2]$, determine the best approximation of $\textbf{1}_{(-\infty,\infty)}$ by 
products  of the form $L(1/p+it) A_n(1/p+it) $ in $L^2(\mathbb{R}, \omega)$, where $\omega(t)=1/(1/p^2+t^2)$ is the weight function.
\end{prob} 
Since, for any choice of $A_n$, the functions $1-L(s,\chi) A_n(s)$ which appear in (\ref{dnz}) are featured by the property that they take value one at the zeros of $L(s,\chi)$, we pose and solve the following natural extremal problem for Dirichlet polynomials:
\begin{prob}
\label{P2} Let $m\in \mathbb{N}$, $t_1, \ldots,t_m$ be $m$ distinct real numbers  and $\mathcal{D}_{n,m}^p$, $m\ll n$, be the space of Dirichlet polynomials $B_{n,p}$ of degree $n$ 
which obey the $m$ interpolation conditions $B_{n,p}(1/p+it_j) =1$, $j=1,\ldots, m$. Determine 
$$
d_{n,m,p}^2 =\inf_{{B_{n,p}\in \mathcal{D}_{n,m}^p}\atop}  \frac{1}{2\pi} \int_{\mathbb{R}} \left| B_{n,p}\left(\frac{1}{p}+it\right) \right|^2 \frac{dt}{1/p^2+t^2}.
$$
\end{prob}
We provide the following solution to Problem \ref{P2}:
\begin{thm}
\label{DnL} 
For every $m\in \mathbb{N}$ and for any distinct real numbers $t_1,\ldots, t_m$, there exists  $n(m)\in \mathbb{N}$, such that, for every $n > n(m)$, there is a unique $\tilde{B}_{n,p} \in \mathcal{D}_{n,m}^p$ for which the infimum 
$$
d_{n,m,p}^2=\inf_{{B_{n,p} \in \mathcal{D}_{n,m}^p}\atop} \frac{1}{2\pi} \int_{\Re(s)=1/p}\left| \frac{B_{n,p}(s)}{s} \right|^2 |ds|
$$
is attained. Moreover,
$$
d_{n,m,p}^2 \sim \frac{1}{\log n} \sum_{j=1}^m  \frac{1}{1/p^2+t_j^2},\ \ \mathrm{as}\ \ n\rightarrow \infty.
$$
\end{thm}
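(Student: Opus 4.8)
The plan is to reformulate the problem as a projection in the Hilbert space $H = L^2(\mathbb{R},\omega)$ with $\omega(t)=1/(1/p^2+t^2)$, and then to estimate the resulting Gram determinant quotient. First I would identify the Dirichlet polynomial $B_{n,p}(1/p+it)=\sum_{k=1}^n b_k k^{-1/p-it}$ with its coefficient vector and observe that, via the isometry $k^{-1/p-it}\mapsto e^{-it\log k}\cdot k^{-1/p}$, the inner product $\frac{1}{2\pi}\int_{\mathbb R} k^{-1/p-it}\,\overline{l^{-1/p-it}}\,\omega(t)\,dt$ can be computed explicitly; using $\frac{1}{2\pi}\int_{\mathbb R}\frac{e^{-i u t}}{1/p^2+t^2}\,dt = \frac{p}{2}e^{-|u|/p}$, this equals $\frac{p}{2}(kl)^{-1/p}e^{-|\log(k/l)|/p} = \frac{p}{2}\min(k,l)^{-2/p}$ (after a short computation with the two orderings of $k,l$). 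Hence the problem lives in $\mathbb{C}^n$ with the Gram matrix $M_n = \big(\tfrac p2 \min(k,l)^{-2/p}\big)_{k,l=1}^n$, a matrix with an explicit tridiagonal inverse of the classical ``minimum kernel'' type.

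Next I would use the fact that a problem ``minimize a quadratic form subject to $m$ affine interpolation constraints'' has the standard Lagrange/linear-algebra solution: writing $v_j\in\mathbb C^n$ for the vector with entries $(v_j)_k = k^{-1/p-it_j}$, the constraints are $\langle b, \overline{v_j}\rangle = 1$, and the minimum of $b^* M_n b$ equals $\mathbf{1}^* (V^* M_n^{-1} V)^{-1}\mathbf{1}$, where $V=[v_1,\dots,v_m]$ and $\mathbf 1=(1,\dots,1)^T$. Equivalently it is the ratio of Gram determinants $\det G(\text{columns of }V)/\det G'$ in the \emph{dual} metric $M_n^{-1}$; either way everything reduces to understanding the $m\times m$ matrix $W_n := V^* M_n^{-1} V$ as $n\to\infty$. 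Existence and uniqueness of the minimizer $\tilde B_{n,p}$ for $n>n(m)$ then follows once $W_n$ is shown to be invertible, which holds as soon as $n\geq m$ and the $t_j$ are distinct (Vandermonde-type nondegeneracy of the $v_j$), so in fact $n(m)=m$ will do; I would state it with a general $n(m)$ to be safe about the asymptotic regime.

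The heart of the matter is the asymptotics $d_{n,m,p}^2 = \mathbf 1^* W_n^{-1}\mathbf 1 \sim \frac{1}{\log n}\sum_{j=1}^m \frac{1}{1/p^2+t_j^2}$. Here I would exploit the explicit tridiagonal form of $M_n^{-1}$: with $c_k=k^{-2/p}$ increasing to $\infty$... wait, decreasing, so with $c_k = (\tfrac p2)^{-1}$ scaling aside, $M_n^{-1}$ has entries built from the gaps $c_k^{-1}-c_{k-1}^{-1}$, i.e. from $k^{2/p}-(k-1)^{2/p}$. Then $W_n = V^* M_n^{-1} V$ has $(i,j)$ entry essentially $\frac2p\sum_{k=1}^{n}\big(k^{2/p}-(k-1)^{2/p}\big)\,\overline{\Delta_k(v_i)}\,\Delta_k(v_j) + (\text{boundary})$, where $\Delta_k$ is a forward difference; since $\Delta_k(v_j) \approx k^{-1/p-it_j}(1-e^{-(1/p+it_j)/k})\cdot(\dots) \asymp k^{-1-1/p-it_j}$ the weights $k^{2/p}-(k-1)^{2/p}\asymp k^{2/p-1}$ combine to give a summand $\asymp k^{-1}\cdot k^{i(t_i-t_j)}$. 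For $i=j$ this is the divergent harmonic series $\sim (2/p)(1/(1+p^2t_i^2)\cdot p^2/4\cdot\dots)\log n$ — I would pin down the constant as exactly $(\log n)\cdot(1/p^2+t_i^2)$ — while for $i\neq j$ the oscillating factor $k^{i(t_i-t_j)}$ makes the sum $O(1)$ by summation by parts. Thus $W_n = (\log n)\,D + O(1)$ with $D=\mathrm{diag}(1/p^2+t_1^2,\dots,1/p^2+t_m^2)$, whence $W_n^{-1} = \frac{1}{\log n}D^{-1} + O((\log n)^{-2})$ and $\mathbf 1^*W_n^{-1}\mathbf 1\sim\frac1{\log n}\sum_j\frac1{1/p^2+t_j^2}$.

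The main obstacle, and the step deserving the most care, is the last one: justifying that the off-diagonal entries of $W_n$ stay bounded and that the diagonal entries have leading term \emph{exactly} $(1/p^2+t_j^2)\log n$ rather than merely $\asymp\log n$. This requires handling the difference operator acting on $k^{-s}$ uniformly in $k$ (an Abel/Euler–Maclaurin summation, controlling the error terms $k^{-s}$ vs.\ the integral $\int x^{-s}\,dx$ and the endpoint contributions $k=1$ and $k=n$), and then a clean summation-by-parts bound for $\sum_{k\le n} k^{-1+i\tau}$ with $\tau=t_i-t_j\ne 0$ fixed. Everything else — the Hilbert-space reduction, the Plancherel computation of $M_n$, the Lagrange-multiplier formula, and invertibility of $W_n$ — is routine linear algebra and Fourier analysis.
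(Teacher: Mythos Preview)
Your approach is correct and, at the structural level, is the same as the paper's: reduce to minimizing a quadratic form in $\mathbb{C}^n$ under $m$ linear interpolation constraints, solve by projection to obtain $d_{n,m,p}^2=\mathbf 1^*W_n^{-1}\mathbf 1$ for an $m\times m$ Hermitian matrix $W_n$, and then show $W_n=(\log n)\,\mathrm{diag}(1/p^2+t_j^2)+o(\log n)$. The difference is in how the asymptotics of $W_n$ are obtained. The paper expands $B_{n,p}$ in Lubinsky's orthonormal Dirichlet polynomials $\psi_k(t)=\bigl(k^{1/p-it}-(k-1)^{1/p-it}\bigr)/\sqrt{k^{2/p}-(k-1)^{2/p}}$, so that $W_n$ becomes (up to the factor $p/2$) the kernel matrix $H=\bigl(K_n(t_i,t_j)\bigr)$ with $K_n(u,v)=\sum_{k\le n}\psi_k(u)\overline{\psi_k(v)}$, and then simply quotes Lubinsky's estimates $K_n(u,u)=\tfrac{p}{2}\,|1/p+iu|^2\log n\,(1+o(1))$ and $|K_n(u,v)|\le C_{u,v}+o(\log n)$ for $u\ne v$. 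Your route --- compute the Gram matrix $M_n$ of the monomials, invert it explicitly via its ``minimum kernel'' tridiagonal structure, and extract the asymptotics of $V^*M_n^{-1}V$ by Abel summation --- is precisely a hands-on re-derivation of those two Lubinsky estimates (indeed, the tridiagonal factorisation of $M_n^{-1}$ \emph{is} the two-term form of the $\psi_k$). What you gain is self-containment: no external reference is needed. What the paper gains is brevity, since the delicate step you flag (pinning down the exact constant on the diagonal and bounding the off-diagonal via summation by parts) is outsourced.

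Two small points. First, the Gram entry is $\tfrac{p}{2}\max(k,l)^{-2/p}$, not $\min(k,l)^{-2/p}$; this does not spoil the argument, since $\max(k,l)^{-2/p}=\min(k^{-2/p},l^{-2/p})$ is still a min-kernel in the decreasing sequence $c_k=k^{-2/p}$, and you effectively correct yourself a few lines later. Second, your assertion that $n(m)=m$ suffices is not justified: linear independence of the columns $v_j$ for \emph{every} $n\ge m$ would require a Chebyshev-type statement for $\{k^{-it}\}_{k\le m}$ that you have not proved. The paper does not claim this either; it deduces invertibility of $W_n$ only for large $n$, from the asymptotics themselves. Your fallback (``state it with a general $n(m)$'') is the right call.
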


Substantial efforts have been put to guess which should be the sequence of Dirichlet polynomials $A_n$ for which 
eventually $d_n(\zeta,2)$ would converge to zero. Balazard and de Roton \cite{BalRot} proved that, under the RH, $d_n^2(\zeta,2) \ll (\log \log n)^{5/2+\varepsilon} (\log n)^{-1/2}$.   Despite that one of the natural candidates are the partial sums of $1/\zeta(s)$, that is 
$\zeta^{-1}_n(s) := \sum_{k=1}^n \mu(k) k^{-s}$, B\'aez-Duarte \cite{B-D99} proved that for this choice the corresponding quantity $d_n(\zeta,2)$ does not converge to zero. The ``molified'' partial sums, defined by 
$$
V_n(s) = \sum_{k=1}^n \left(1-\frac{\log k}{\log n} \right) \frac{\mu(k)} {k^{s}},
$$
seem to be a better choice. Recently Bettin, Conrey and Farmer \cite{BetConFar}  proved that, if the RH is true and the additional 
requirement that 
$\sum_{|\Im \rho|\leq T} 1/|\zeta^\prime(\rho)|^2  \ll T^{3/2-\delta}$  holds for some $\delta>0$, where as usual $\rho$ denote the nontrivial zeros of $\zeta$,  then 
$$
 \frac{1}{2\pi} \int_{\mathbb{R}} \left| 1-\zeta \left(\frac{1}{2}+it\right)\, V_n\left(\frac{1}{2}+it\right) \right|^2 \frac{dt}{1/4+t^2} \sim \frac{1}{\log n}\sum_{\Re (\rho) =1/2} \frac{1}{|\rho|^2},\ \ \mathrm{as}\ \ n\rightarrow \infty.
$$
Is is worth mentioning that  Burnol \cite{Bur}
had generalized a previous result of B\'aez-Duarte, Balazard, Landreau and Saias \cite{BBLS}  proving unconditionally that 
$$
\liminf_{n\rightarrow \infty}\ d_n^2(\zeta,2)\, \log n\ \geq \sum_{\Re(\rho)=1/2} \frac{m(\rho)^2}{|\rho|^2},
$$ 
where $m(\rho)$ stands for the multiplicity of $\rho$.  De Roton \cite{Rot06} extended the latter lower bound for $L$-functions in the Selberg class.
 
If we consider $t_1,\dots, t_m$ as the imaginary parts of $m$ distinct zeros $\rho_1,\ldots, \rho_m$ of the $\zeta$ function on the critical line, then the result of Theorem \ref{DnL} becomes
$$
d_{n,m,2}^2 \sim \frac{1}{\log n} \sum_{j=1}^m  \frac{1}{|\rho_j|^2},\ \ \mathrm{as}\ \ n\rightarrow \infty.
$$
Since this result is obviously not conditional, the similarity with the conditional result of Bettin, Conrey and Farmer is amazing. 
This raises the question about how close the spaces and the corresponding approximating functions in the extremal Problems 
1 and 2 are, especially if we let both $n$ and $m$ to go to infinity in Problem 2, eventually in a peculiar way. It is worth mentioning that
naive numerical experiments show that $1- \zeta V_n$ and $\tilde{B}_{n,p}$ match very well when the variable $t$ is close to the origin, 
even for relatively small values of $m$ and $n$.  

Suppose that the $L$-function $L(s,\chi)$, defined above, possesses $m$ distinct zeros $1/p+it_1,\ldots$, $1/p+it_m$ on the vertical line $\Re(s)=1/p$. Then Theorem \ref{DnL} says that   
$$
d_{n,m,p}^2 \sim \frac{1}{\log n} \sum_{j=1}^m  \frac{1}{|1/p+t_j|^2},\ \ \mathrm{as}\ \ n\rightarrow \infty.
$$

\section{Preliminary results}
\setcounter{equation}{0}

 In this section we provide definitions and various classical results that we shall need in the proofs of the main results in order to make 
the exposition relatively self-contained. Some technical results are established too.
\subsection{Definitions and general considerations}

 We denote by $\mathcal{D}$ the space of Dirichlet series $\eta(s)=\sum_{k=1}^{\infty}a_kk^{-s}$ with the following property:  $a_1\neq 0$ and there exist real constants 
$\alpha=\alpha_\eta$ and $\beta=\beta_\eta$, with $0 \leq \alpha \leq 1$, such that the function $\kappa_\eta:\R_+\mapsto \C$, defined by
\begin{equation}
\label{kapeta}
\kappa_\eta(x)=\beta\, x^\alpha-\sum_{k\leq x}a_k,
\end{equation}
is bounded. By convention,  we set $\beta=0$ whenever $\alpha=0$. With every $\eta \in \mathcal{D}$ we associate the space 
\begin{equation}
\label{Ceta}
C_\eta := \left\{ h:(0,1) \mapsto \C\,:\, h(x)=\sum_{k=1}^{n} b_k\, \kappa_\eta \left(\frac{1}{\theta_kx}\right), \ b_k\in \C, \ \theta_k \geq1, \ \beta\, \sum_{k=1}^{n} \frac{b_k}{\theta_k^{\alpha}}=0 \right\}.
\end{equation}
Observe that the functions in $C_\eta$ are well defined and  vanish identically for $x>1$. 
The subspace $C_{\eta,\N}\subset C_\eta$ is obtained from $C_\eta$ when the restrictions $\theta_k\geq 1$ are substituted by $\theta_k \in \N$. 
We denote by $C_{\eta}^p$ and $C_{\eta,\N}^p$  the closures of $C_\eta$ and $\C_{\eta,\N}$ in $L^p(0,1)$.

 Let $\eta \in \mathcal{D}$. Then for every complex function $f \in C^1(0,n]$, with the aid of Abel's  identity (see \cite[Theorem 4.2]{Apo}), we can write 
$$
\sum_{k\leq n} a_kf(k)=-\kappa_\eta(n)f(n)+\beta f(1)+\int_1^n \kappa_\eta(y)f'(y)dx+\beta\, \alpha \int_1^n y^{\alpha-1}f(y)dy.
$$
Choosing $f(y)=y^{-s}$ we obtain
$$
\sum_{k\leq n} \frac{a_k}{k^s}=-\frac{\kappa_\eta(n)}{n^s}+\beta-s\int_1^n \frac{\kappa_\eta(y)}{y^{s+1}}dy+\frac{\beta \alpha\, n^{\alpha-s}}{\alpha-s} -\frac{\beta \alpha}{\alpha-s}\ \  \ \mathrm{for}\ \ \Re(s)>1.
$$
Letting $n \to \infty$ and using the fact that $\kappa_\eta$ is bounded, we obtain
$$
\frac{\eta(s)}{s}=\frac{\beta}{s-\alpha}-\int_1^\infty \frac{\kappa_\eta(y)}{y^{s+1}}\, dy,  \ \ \ \Re(s)>0.
$$
If $\theta\geq1$ and $\Re(s)>0$, the change of variables $y=1/(\theta x)$ and the explicit expression for $\kappa_\eta$ yield
\begin{eqnarray*}
\frac{\eta(s)\, \theta^{-s}}{s}& = & \frac{\beta\, \theta^{-s}}{s-\alpha}-\int_0^{1/\theta} \kappa_\eta(1/\theta x)\, x^{s-1}\, dx\\
\ & = & \frac{\beta\, \theta^{-s}}{s-\alpha}-\int_0^{1} \kappa_\eta(1/\theta x)\, x^{s-1}\, dx + \int_{1/\theta}^{1} \kappa_\eta(1/\theta x)\, x^{s-1}\, dx\\
\ & = & \frac{\beta\, \theta^{-\alpha}}{s-\alpha}-\int_0^1 \kappa_\eta(1/\theta x)\, x^{s-1}\, dx.\\
\end{eqnarray*}
Therefore, for every $h(x)=\sum_{k=1}^{n} b_k\, \kappa_\eta (1/\theta_kx)$ in $C_\eta$ we obtain
\begin{equation}
\label{Int1f}
\int_0^1 h(x)x^{s-1}dx=-\frac{\eta(s)\sum_{k=1}^{n}b_k\theta_k^{-s}}{s}, \ \  \Re(s)>0.
\end{equation}
Since the Mellin transform is defined by 
$$
\mathcal{M}\left[f(x);s\right]=\mathcal{M}f(s)=\int_0^\infty f(x)x^{s-1}dx,
$$
then (\ref{Int1f}) shows that the Mellin transform of any function $h$ in $C_\eta$
is given by 
\begin{equation}
\label{Mellin}
\mathcal{M}\left[h(x);s\right]=-\frac{\eta(s)\sum_{k=1}^{n}b_k\theta_k^{-s}}{s}, \ \  \Re(s)>0.
\end{equation}

Let $L^2(\Re(s)=1/2)$ be the space of complex functions $f$ such that $g(t)=f(1/2+it)$ and  $g\in L^2(-\infty,\infty)$. The Mellin transform $\mathcal{M}:  L^2 (0,\infty) \mapsto L^2(\Re(s)=1/2)$ is defined in the following way. 
Let $\mathcal{B}\subset L^2 (0,\infty)$ be the space of functions $f$, such that the corresponding function $g(x)=f(x)x^{-1/2}$ obeys $g\in L^1(0,\infty)$. Then the Mellin transform of $f\in \mathcal{B}$ is defined by 
$$
\mathcal{M}\left[f(x);s\right]=\mathcal{M}f(s)=\int_0^\infty f(x)x^{s-1}dx.
$$
Observe that the Mellin and the Fourier transform are related by
$$
\mathcal{M}f(1/2+it)=\mathcal{F}\left[ f(e^{-u})e^{-u/2};t\right].
$$
This observation and the fact that $\mathcal{B}$ is dense in $L^2(0,\infty)$ allows us to apply the Plancherel theorem to extend $1/\sqrt{2 \pi} \mathcal{M}$ to an isometry between $L^2(0,\infty)$ and $L^2(\Re(s)=1/2)$.

\subsection{Lemmas and Theorems}

The following form of the Phragm\'en-Lindel\"of principle appear as Theorem 5.53 in  \cite{IwaKow}:
\begin{LEM} 
\label{PLP}
Let $f$ be a function holomorphic on an open neighborhood of a strip $a\leq \sigma \leq b$, for some real numbers $a<b$, such that
$$|f(s)|=O\left( \exp(|s|^C)\right)$$
for some $C\geq 0$ on $a\leq \sigma \leq b$. Assume that
\begin{eqnarray*}
|f(a+it)| & \leq & M_a(1+|t|)^\alpha \\
|f(b+it)| & \leq & M_b(1+|t|)^\beta
\end{eqnarray*}
for $t\in \R$. Then
$$
|f(\sigma+it|\leq M_a^ {l(\sigma)}M_b^{1-l(\sigma)}(1+|t|)^{\alpha l(\sigma)+\beta(1-l(\sigma))}
$$
for all $s$ in the strip, where $l$ is the linear function such that $l(a)=1$ and $l(b)=0$.
\end{LEM}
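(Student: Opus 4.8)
The plan is to follow the classical Phragm\'en--Lindel\"of convexity argument, reducing the stated weighted bound to the bounded (three-lines) case by dividing out two explicit analytic comparison factors.

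First I would dispose of trivialities: if $M_a=0$ then $|f(a+it)|\le 0$ forces $f\equiv 0$ on $\Re s=a$, hence $f\equiv 0$ in the strip by the identity theorem, and likewise for $M_b$; so assume $M_a,M_b>0$. Let $l(s)=(b-s)/(b-a)$ be the \emph{complex} linear function extending the $l$ of the statement, so that $\Re\, l(\sigma+it)=l(\sigma)$ for every $t$ while $l$ is holomorphic. Replacing $f$ by
$$f_1(s)=f(s)\,M_a^{-l(s)}\,M_b^{-(1-l(s))}$$
only multiplies $|f|$ by $M_a^{-l(\sigma)}M_b^{-(1-l(\sigma))}$ on each vertical line and adds a term linear in $s$ to the exponent in the growth bound, so the order hypothesis $|f_1(s)|=O(\exp(|s|^{C'}))$ persists. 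The boundary hypotheses become $|f_1(a+it)|\le(1+|t|)^\alpha$ and $|f_1(b+it)|\le(1+|t|)^\beta$, and the desired conclusion becomes $|f_1(\sigma+it)|\le(1+|t|)^{E(\sigma)}$ with $E(\sigma)=\alpha\, l(\sigma)+\beta\,(1-l(\sigma))$, a real linear function of $\sigma$.

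The key step is to strip off the polynomial factor by a second comparison function. Writing $E(s)$ for the complex linear extension of $E$, I would set
$$P(s)=\exp\!\Big(\tfrac12\,E(s)\,\log\phi(s)\Big),\qquad \phi(s)=c-(s-\mu)^2,$$
with $\mu=(a+b)/2$ and $c$ large enough that $\Re\,\phi(\sigma+it)=t^2-(\sigma-\mu)^2+c>0$ on the whole strip; then $\phi$ maps the strip into the right half-plane, the principal logarithm is available, and $P$ is holomorphic and zero-free. A direct computation gives $|\phi(\sigma+it)|\asymp(1+|t|)^2$ and $\arg\phi(\sigma+it)=O(1/|t|)$, whence $|P(\sigma+it)|\asymp(1+|t|)^{E(\sigma)}$, with matching powers $\alpha,\beta$ on the two boundary lines. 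The hard part will be exactly this construction: producing an analytic function whose modulus realizes the \emph{linearly varying} exponent $E(\sigma)$ of $(1+|t|)$ --- the mismatch between real and imaginary parts when raising to the complex power $E(s)$ is what forces the careful choice of $\phi$, so that the stray factor $\exp(-\tfrac12\,\Im E(s)\,\arg\phi)$ stays bounded, and pinning the implied constant down to $1$ requires the sharp asymptotics of $P$ as $|t|\to\infty$.

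It then remains to treat $g=f_1/P$, which is holomorphic on the strip, of finite order, and bounded on the two boundary lines by a constant $K$. Here I would invoke the standard strip maximum principle: for $\varepsilon>0$ put $g_\varepsilon(s)=g(s)\exp(-\varepsilon\cos(\lambda(s-\mu)))$ with $\lambda<\pi/(b-a)$. Since $\lambda\,|\sigma-\mu|\le\lambda(b-a)/2<\pi/2$ on the strip, $\Re\cos(\lambda(s-\mu))=\cos(\lambda(\sigma-\mu))\cosh(\lambda t)$ is nonnegative on the boundary (so $|g_\varepsilon|\le K$ there) and grows like $e^{\lambda|t|}$ inside, which dominates the finite-order growth $\exp(O(|t|^{C'}))$ of $g$; hence $g_\varepsilon\to 0$ as $|t|\to\infty$ uniformly in $\sigma$. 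Applying the maximum modulus principle on the rectangles $[a,b]\times[-T,T]$, letting $T\to\infty$ and then $\varepsilon\to0^+$, yields $|g(s)|\le K$ throughout the strip. Undoing the two normalizations turns this into $|f(\sigma+it)|\le M_a^{l(\sigma)}M_b^{1-l(\sigma)}(1+|t|)^{E(\sigma)}$, which is the assertion.
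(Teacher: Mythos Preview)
The paper does not prove this lemma at all: it is simply quoted as Theorem~5.53 of Iwaniec--Kowalski and used as a black box, so there is no ``paper's proof'' to compare against. Your sketch is the standard Phragm\'en--Lindel\"of argument and is essentially sound. The two normalisations (dividing by $M_a^{l(s)}M_b^{1-l(s)}$ and then by a comparison function $P$ whose modulus realises the varying exponent $(1+|t|)^{E(\sigma)}$) are exactly the right moves, and your choice $P(s)=\exp\bigl(\tfrac12 E(s)\log\phi(s)\bigr)$ with $\phi(s)=c-(s-\mu)^2$ works because $\Im E(s)=O(|t|)$ while $\arg\phi(s)=O(1/|t|)$, so the stray factor stays bounded. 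The final step---damping by $\exp(-\varepsilon\cos(\lambda(s-\mu)))$ with $\lambda<\pi/(b-a)$---is also fine: $\Re\cos(\lambda(s-\mu))=\cos(\lambda(\sigma-\mu))\cosh(\lambda t)$ grows like $e^{\lambda|t|}$, which dominates $|t|^{C'}$ for every fixed $C'$, so the finite-order hypothesis is indeed absorbed.

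The one point you correctly flag but do not fully resolve is the implied constant. Your $|P(\sigma+it)|\asymp(1+|t|)^{E(\sigma)}$ gives the conclusion only up to a multiplicative constant depending on $a,b,\alpha,\beta$, not the sharp inequality as stated. For the applications in this paper (all of them $O$-bounds in the proof of Theorem~\ref{Littl_Est}) the constant is irrelevant, so your argument is entirely adequate here; if you want the constant to be exactly $1$, one standard device is to apply the bounded-strip principle to $f_1(s)(\phi(s)/\phi(s_0))^{-E(s)/2}$ on the sub-strip $|t|\ge T$ for large $T$, where $s_0=\sigma_0+iT$ is the point under consideration, and then let the normalising point track $s_0$.
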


The following is Lemma 3.12 in  \cite{Tit}:

\begin{LEM}
\label{LemTit}
 Let $f(s)=\sum a_k k^{-s}$ be a Dirichlet series, convergent for $\Re(s)=\sigma >1$, with $a_k=O(\psi(k))$, where $\psi(n)$ is a nondecreasing function and
$$
\sum \frac{|a_k|}{k^{\sigma}}=O\left( \frac{1}{(\sigma-1)^\alpha} \right), \ \ \ \sigma \to 1.
$$ 
Moreover, if $c>0$, $\sigma+c>1$, $x$ is a non-integer and  $N$ is the integer closest to $x$, then 
\begin{eqnarray*}
\sum_{k<x} \frac{a_k}{n^s} & = & \frac{1}{2 \pi i} \int_{c-iT}^{c+iT}f(s+w)\frac{x^w}{w}dw+O\left( \frac{x^c}{T(\sigma+c-1)^\alpha}\right)\\
& & +O\left( \frac {\psi(2x)x^{1-\sigma} \log x}{T}\right) + O\left( \frac {\psi(N)x^{1-\sigma}}{T|x-N|}\right)
\end{eqnarray*}
\end{LEM}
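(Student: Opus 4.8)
\emph{Approach.}
The statement is a form of the truncated Perron formula, and the plan is to prove it by term-by-term Mellin inversion. I would start from the contour integral on the right, insert the Dirichlet series $f(s+w)=\sum_k a_k k^{-s-w}$, and interchange summation and integration. On the segment $\Re(w)=c$ one has $\Re(s+w)=\sigma+c>1$, so the series converges absolutely and uniformly on the compact contour and the interchange is legitimate; using $k^{-w}x^{w}=(x/k)^{w}$ it gives
$$
\frac{1}{2\pi i}\int_{c-iT}^{c+iT} f(s+w)\frac{x^w}{w}\,dw = \sum_{k=1}^{\infty} \frac{a_k}{k^s}\cdot\frac{1}{2\pi i}\int_{c-iT}^{c+iT}\frac{(x/k)^w}{w}\,dw.
$$
Everything then reduces to the fundamental integral $I(y)=\frac{1}{2\pi i}\int_{c-iT}^{c+iT}\frac{y^w}{w}\,dw$ evaluated at $y=x/k$.

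\emph{The fundamental integral.}
I would first establish the classical estimate
$$
I(y)=\delta(y)+O\!\left(y^c\min\!\left(1,\frac{1}{T\,|\log y|}\right)\right),\qquad y\neq1,
$$
where $\delta(y)=1$ for $y>1$ and $\delta(y)=0$ for $0<y<1$. For $y>1$ one closes the segment with a rectangle whose left side sits at $\Re(w)=-U$ and lets $U\to\infty$: the only pole enclosed is at $w=0$, with residue $1$, contributing $\delta(y)=1$; the left side vanishes since $y^{-U}\to0$, and each horizontal side at $\Im(w)=\pm T$ is bounded by $\int_{-\infty}^{c}y^{\sigma}/T\,d\sigma\ll y^{c}/(T\log y)$. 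For $0<y<1$ one shifts to the right, encountering no pole, with the analogous horizontal bound. The minimum with $1$ incorporates the trivial bound $I(y)\ll y^{c}$ and so covers the range where $|\log y|$ is small. Since $x$ is non-integer, $y=x/k\neq1$ for every $k$, so the main terms assemble exactly into $\sum_{k<x}a_k k^{-s}$.

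\emph{Error estimation.}
Collecting the remaining contributions and taking absolute values, the total error is bounded by
$$
x^c\sum_{k\geq1}\frac{|a_k|}{k^{\sigma+c}}\,\min\!\left(1,\frac{1}{T\,|\log(x/k)|}\right),
$$
which I would split into three ranges. For $k\leq x/2$ or $k\geq2x$ one has $|\log(x/k)|\geq\log2$, so the minimum is $\ll1/T$, and the hypothesis applied at $\sigma+c$ in place of $\sigma$ gives $\sum_k|a_k|k^{-\sigma-c}\ll(\sigma+c-1)^{-\alpha}$; this yields the first error term $O(x^c/(T(\sigma+c-1)^{\alpha}))$. For $x/2<k<2x$ one uses $a_k=O(\psi(k))=O(\psi(2x))$ together with $k\asymp x$ and $(x/k)^{c}\asymp1$; writing $|\log(x/k)|\gg|x-k|/x$, the terms with $k\neq N$ contribute $\ll\psi(2x)\,x^{1-\sigma}T^{-1}\sum_{1\leq j\ll x}1/j\ll\psi(2x)\,x^{1-\sigma}T^{-1}\log x$, the second term. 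Finally the single term $k=N$, the integer nearest $x$, contributes $\ll\psi(N)\,x^{-\sigma}\cdot x/(T|x-N|)=\psi(N)\,x^{1-\sigma}/(T|x-N|)$, the third term.

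\emph{Main obstacle.}
The only genuinely delicate points lie in the near-diagonal range $x/2<k<2x$. One must pass from $\min(1,1/(T|\log(x/k)|))$ to the sharper $x/(T|x-k|)$ through $|\log(x/k)|=|\log(1+(x-k)/k)|\asymp|x-k|/x$, which is valid precisely because $k\asymp x$; and one must then isolate the single closest integer $N$, whose distance $|x-N|$ can be arbitrarily small and therefore must be assigned its own error term. The non-integrality of $x$ is what makes this separation clean: for the remaining integers $k\neq N$ one has $|x-k|\geq|k-N|-|x-N|\geq\tfrac12|k-N|$, which keeps the harmonic sum $\sum_{j\geq1}1/j\ll\log x$ under control. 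The horizontal-side estimates in the fundamental integral and the uniform bound $\sum_k|a_k|k^{-\sigma-c}\ll(\sigma+c-1)^{-\alpha}$ for all $\sigma+c>1$ (obtained by comparing the stated asymptotic as $\sigma+c\to1^{+}$ with the trivial $O(1)$ bound away from $1$) are then routine.
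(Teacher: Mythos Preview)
The paper does not prove this lemma at all: it is quoted verbatim as Lemma~3.12 of Titchmarsh and used as a black box. Your argument is correct and is precisely the classical proof found in Titchmarsh --- inserting the series into the truncated Perron integral, reducing to the basic discontinuous integral $I(y)=\tfrac{1}{2\pi i}\int_{c-iT}^{c+iT}y^{w}w^{-1}\,dw$, and splitting the resulting error sum into the far range ($k\le x/2$ or $k\ge 2x$), the near range $x/2<k<2x$ with $k\ne N$, and the single nearest integer $k=N$. One tiny wording issue: the ``trivial bound $I(y)\ll y^{c}$'' is not literally what holds (the naive bound on $|I(y)|$ carries an extra $\log(T/c)$); what you actually need and use is $I(y)-\delta(y)=O(y^{c})$ for $y$ in a $1/T$-neighbourhood of $1$, which follows from $I(1)=\tfrac12+O(c/T)$ together with $|y^{w}-1|\le |w|\,|\log y|\,y^{c}$ on the segment. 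This does not affect the argument.
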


Another result we shall need is the following summary of Theorems 5.6 and 5.23 in \cite{IwaKow}:

\begin{THEO} 
\label{ThIw}
Let $L(s,\chi)$ be a Dirichlet $L$-function. 
Then:
\begin{enumerate}
\item[(i)] the estimate 
$$
| L(s,\chi) | < O(|s|^{1/4})
$$
holds for $\Re(s)= 1/2$;
\item[(ii)] If $L(s,\chi)$ is a Dirichlet $L$-function with a primitive character modulo $q$, then the corresponding $\xi$-function
\begin{equation}
\label{xiL}
\xi_{\chi}(s)=(s(s-1))^{r(\chi)}q^{s/2}\pi^{s/2}\Gamma\left( \frac{s+\alpha(\chi)}{2}\right) L(s,\chi),
\end{equation}
where $r(\chi)$ is the order of the pole at $s=1$ and either $\alpha(\chi)=0$ if $\chi(-1)=1$ or $\alpha(\chi)=1$ when $\chi(-1)=-1$, 
is an entire function of order one and can be factorised in the form
\begin{equation}
\label{Had_fac}
\xi_{\chi}(s)=e^{a+bs}\prod_{\rho} \left(1-\frac{s}{\rho}\right)e^{s/\rho},
\end{equation}
where $\rho$ runs over the nontrivial zeros of $L(s,\chi)$. Moreover, if $L(s,\chi)$  is a Dirichlet $L$-function with any character modulo $q$, the Hadamard factorization {\rm (\ref{Had_fac})}  still holds because 
of \cite[Theorem 12.9]{Apo}. 
\end{enumerate}
\end{THEO}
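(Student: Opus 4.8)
The plan is to prove the two items by classical means: for (i) I use the functional equation of $L(s,\chi)$, Stirling's asymptotics for the $\Gamma$-factor, and the Phragm\'en--Lindel\"of principle of Lemma \ref{PLP}; for (ii) I use Hadamard's factorization theorem. Throughout I may assume $\chi$ primitive modulo $q$, since the reduction to a general character is exactly \cite[Theorem 12.9]{Apo} and is addressed at the end.

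For (i) I interpolate $|L(s,\chi)|$ between two vertical lines on which its size is transparent. On $\Re(s)=1+\delta$ the Dirichlet series converges absolutely, so $|L(s,\chi)|$ is bounded independently of $t=\Im s$. On $\Re(s)=-\delta$ I insert the functional equation, which expresses $L(s,\chi)$ as a unimodular constant times $(q/\pi)^{1/2-s}$, a ratio of $\Gamma$-factors, and $L(1-s,\bar\chi)$; since $1-s$ now lies on $\Re=1+\delta$ the last factor is bounded, while Stirling applied to the $\Gamma$-ratio (the exponential factors $e^{-\pi|t|/4}$ cancel) produces growth of order $(1+|t|)^{1/2+\delta}$. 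Thus in Lemma \ref{PLP} I take $a=-\delta$, $\alpha=1/2+\delta$, $b=1+\delta$, $\beta=0$; the midpoint value $l(1/2)=1/2$ yields the exponent $(1/2+\delta)/2$, so $|L(1/2+it,\chi)|\ll_{\varepsilon}(1+|t|)^{1/4+\varepsilon}$, and since $|s|\asymp 1+|t|$ on $\Re(s)=1/2$ this is the convexity bound of (i). For the principal character one first clears the pole by applying Lemma \ref{PLP} to $(s-1)L(s,\chi)$ and then divides by $|s-1|\asymp 1+|t|$; because $1/2$ is the midpoint the extra polynomial factor cancels and the exponent $1/4$ is unchanged. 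The only hypothesis of Lemma \ref{PLP} needing attention is the finite-order bound $|f(s)|=O(\exp(|s|^{C}))$ in the strip, which follows from boundedness on the right edge, the functional equation on the left edge, and the finite order of $\Gamma$.

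For (ii) I first verify that $\xi_\chi$ is entire. When $\chi$ is non-principal $r(\chi)=0$ and $L(s,\chi)$ is already entire, while for the principal character $r(\chi)=1$ and the factor $s(s-1)$ cancels the simple pole at $s=1$; in either case the poles of $\Gamma((s+\alpha(\chi))/2)$, located at $s=-\alpha(\chi)-2k$ with $k\ge0$, coincide exactly with the trivial zeros of $L(s,\chi)$ dictated by the parity $\chi(-1)=\pm1$, so every singularity cancels and $\xi_\chi$ is holomorphic on $\C$. To see that $\xi_\chi$ has order $1$, I estimate $\log|\xi_\chi(s)|$: Stirling gives $\log|\Gamma((s+\alpha(\chi))/2)|\sim\tfrac12|s|\log|s|$, the factors $q^{s/2}\pi^{s/2}$ and $(s(s-1))^{r(\chi)}$ contribute at most order $1$, and by (i) together with the functional equation $L(s,\chi)$ grows at most polynomially in vertical strips; hence $\log|\xi_\chi(s)|=O(|s|\log|s|)=O(|s|^{1+\varepsilon})$, giving order $\le 1$, while the $\Gamma$-factor alone forces order $\ge 1$, so the order is exactly $1$.

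Since $\xi_\chi$ is entire of order $1$, Hadamard's theorem yields a product of genus at most $1$, namely $e^{a+bs}\prod_\rho(1-s/\rho)e^{s/\rho}$ over the zeros $\rho$ of $\xi_\chi$; as the $\Gamma$-factor vanishes nowhere and $(s(s-1))^{r(\chi)}$ only removes a pole, these zeros are precisely the nontrivial zeros of $L(s,\chi)$, which is (\ref{Had_fac}). Finally, if $\chi$ is induced by a primitive character $\chi^{\ast}$ modulo $q^{\ast}\mid q$, then $L(s,\chi)=L(s,\chi^{\ast})\prod_{p\mid q}(1-\chi^{\ast}(p)p^{-s})$, and the finite Euler product introduces no nontrivial zeros, so the factorization persists; this is the content of \cite[Theorem 12.9]{Apo}. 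I expect the main obstacle to be not any single estimate but the bookkeeping of (ii): matching the poles of $\Gamma((s+\alpha(\chi))/2)$ one-for-one with the trivial zeros of $L(s,\chi)$ according to the parity of $\chi$, and controlling the growth of $\xi_\chi$ uniformly over the whole plane rather than merely inside strips, so that Hadamard's theorem genuinely applies.
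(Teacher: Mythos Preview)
The paper does not prove this statement at all: it is quoted as ``a summary of Theorems 5.6 and 5.23 in \cite{IwaKow}'' and used as a black box. Your sketch is the standard textbook argument that underlies those cited results --- the convexity bound via the functional equation and Lemma~\ref{PLP} for (i), and Hadamard's factorization of an entire function of order one for (ii) --- and it is correct in outline. One small remark: in your Phragm\'en--Lindel\"of step the value $l(1/2)$ is not exactly $1/2$ but $(1/2+\delta)/(1+2\delta)$, which only tends to $1/2$ as $\delta\to 0$; this is why the method yields the exponent $1/4+\varepsilon$ rather than a clean $1/4$, consistent with what you actually wrote.
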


The following is an analog of a celebrated result of Littlewood \cite{Litt} (see also \cite[Theorem 14.2]{Tit} and \cite[Theorem 1.12]{Ivic}) about conditional estimates of the zeta function along vertical lines on the semi-plane $\Re(s)> 1/2$. 
The generalisation below concerns estimates of a Dirichlet $L$-series and its reciprocal along vertical lines on the semi-plane free of zeros of $L$.
\begin{thm} 
\label{Littl_Est}
Let $\delta, \epsilon >0$, $p\in (1,2]$, and $L(s,\chi)$ be a Dirichlet  $L$-function without zeros in the semi-plane $\sigma>1/p$. 
Then there is a positive  $t_0=t_0(p,\delta,\epsilon)$ such that 
$$
|L(s,\chi)|^{\pm1}\leq |t|^\epsilon
$$
for  $\sigma > 1/p+\delta$ and $|t|\geq t_0$.
\end{thm}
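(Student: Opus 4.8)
The plan is to adapt the classical Littlewood argument (as in \cite[Theorem 14.2]{Tit}), exploiting the Hadamard factorisation from Theorem~\ref{ThIw}(ii) together with the Phragm\'en--Lindel\"of principle of Lemma~\ref{PLP}. We first reduce to a primitive character: if $\chi$ is induced by the primitive character $\chi^*$ modulo $q^*$, then $L(s,\chi)=L(s,\chi^*)\prod_{p\mid q}(1-\chi^*(p)p^{-s})$, and the finite Euler product is bounded above and below (away from its zeros, all of which lie on $\Re s=0$) uniformly for $\sigma>1/p+\delta$; so it suffices to prove the estimate for primitive $\chi$. The upper bound $|L(s,\chi)|\le|t|^{\epsilon}$ is the easier half: in the half-plane $\sigma\ge 2$ one has $|L(s,\chi)|=O(1)$ by absolute convergence, on the line $\sigma=1/p+\delta/2$ one has the crude convexity bound $|L(s,\chi)|\ll|t|^{A}$ for some fixed $A$ (from $|\zeta(s)|$-type estimates, or directly from Theorem~\ref{ThIw}(i) via the functional equation), and then Lemma~\ref{PLP} applied to $L(s,\chi)(s-1)^{r(\chi)}$ on the strip $1/p+\delta/2\le\sigma\le 2$ interpolates. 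This alone gives only a power of $|t|$, not $|t|^{\epsilon}$; to upgrade it one runs the standard Borel--Carath\'eodory / Hadamard three-circles refinement: since $L$ has no zeros in $\sigma>1/p$, the function $\log L(s,\chi)$ is holomorphic there, and bounding $\Re\log L$ on a large circle (using the crude polynomial bound just obtained) together with $\log L(s,\chi)=O(1)$ near $\sigma=2$ yields, via three circles, that $\log L(s,\chi)=o(\log|t|)$, hence $|L(s,\chi)|\le|t|^{\epsilon}$ for $\sigma>1/p+\delta$ and $|t|$ large.

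For the reciprocal $|L(s,\chi)|^{-1}\le|t|^{\epsilon}$, i.e. $-\log|L(s,\chi)|\le\epsilon\log|t|$, the key input is again the zero-free hypothesis in $\sigma>1/p$, but now one needs a \emph{lower} bound for $|L|$, which is where the argument is genuinely delicate. The standard device is: take the point $s=\sigma+it$ with $\sigma>1/p+\delta$ and compare $\log|L(\sigma+it)|$ with $\log|L(2+it)|=O(1)$ by integrating $\tfrac{d}{d\sigma}\log|L| = \Re\,(L'/L)$ across the segment. Using the partial-fraction (logarithmic-derivative) expansion coming from the Hadamard product \eqref{Had_fac},
$$
\frac{L'}{L}(s,\chi)=b(\chi)+\sum_{\rho}\left(\frac{1}{s-\rho}+\frac{1}{\rho}\right)+(\text{$\Gamma$-factor and pole terms}),
$$
one bounds the sum over zeros: every $\rho$ has $\Re\rho\le 1/p<\sigma$, so $\Re\frac{1}{s-\rho}\ge 0$ contributes with a favourable sign in a suitable averaged form, while the number of zeros with $|\Im\rho-t|\le 1$ is $O(\log|t|)$ (classical zero-counting for $L$-functions, which follows from Jensen's formula applied to $\xi_\chi$ and the order-one bound in Theorem~\ref{ThIw}(ii)). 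Averaging over a short vertical segment $t'\in[t-1,t+1]$ to kill the diagonal contribution of nearby zeros, and estimating the tail $\sum_{|\Im\rho-t|>1}$ by the same $O(\log|t|)$ density, gives $\big|\Re\,(L'/L)(\sigma+it')\big|\ll\log|t|$ on average, whence $|\log|L(\sigma+it)|\,|\ll\log|t|$, and a second application of the three-circles / convexity refinement (now to $1/\log L$, legitimate because $L$ is zero-free) sharpens $\log|t|$ to $\epsilon\log|t|$.

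The main obstacle I expect is precisely this lower bound, i.e. controlling $L'/L$ (equivalently ruling out that $|L|$ is abnormally small) at height $t$ using \emph{only} the zero-free assumption $\sigma>1/p$ rather than a zero-free region shrinking towards the line: one cannot localise an individual zero away from the vertical line $\Re s=1/p$, so the argument must be robust to zeros sitting exactly on, or arbitrarily close to, $\Re s = 1/p$, and the gain must come entirely from working at $\sigma\ge 1/p+\delta$ (a fixed positive distance) combined with the averaging in $t$. Once that averaged bound $(L'/L)(\sigma+it')\ll\log|t|$ is in hand, the passage to $o(\log|t|)$ and then to the clean $|t|^{\pm\epsilon}$ is routine: it is the Borel--Carath\'eodory plus Hadamard-three-circles package applied to $\log L$ in the (open) zero-free half-plane, using the already-established crude polynomial bounds on the two bounding vertical lines and the triviality $\log L(2+it)=O(1)$.
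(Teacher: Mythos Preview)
Your proposal is correct in outline, but it works harder than necessary, and in particular you have misidentified where the difficulty lies. The paper's proof is essentially the argument you sketch at the end of your \emph{first} paragraph, and that argument already yields both the upper and the lower bound simultaneously --- your entire second paragraph (the $L'/L$ partial-fraction expansion, zero-counting, and $t$-averaging) is superfluous.

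Concretely: since $L(s,\chi)$ has no zeros in $\sigma>1/p$, the function $\log L(s,\chi)$ is holomorphic there (away from the possible pole at $s=1$). The paper applies Borel--Carath\'eodory to $\log L$ on concentric circles centred at $2+it$ with radii just short of $2-1/p$; the only input needed is the one-sided bound $\Re\log L=\log|L|<\log t$ on the larger circle, which comes from the convexity bound (Theorem~\ref{ThIw}(i) plus Phragm\'en--Lindel\"of). Borel--Carath\'eodory converts this into a two-sided bound $|\log L|\le A(\log\log t)\log t$ on the smaller circle. Then Hadamard three circles, with centres shifted far to the right (at $\log\log t+it$), interpolates between this and the trivial $|\log L(2+it)|=O(1)$ to produce
\[
|\log L(\sigma+it,\chi)|\;<\;A\,\log\log t\cdot(\log t)^{(1-\sigma)/(1-1/p)},
\]
which is $\le\epsilon\log t$ once $\sigma\ge 1/p+\delta$ and $t$ is large. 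Since this bounds $|\log L|$, not merely $\Re\log L$, both inequalities $|L|^{\pm1}\le|t|^\epsilon$ drop out at once.

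The machinery you invoke in your second paragraph --- controlling $L'/L$ via the explicit formula, counting zeros in windows of height $1$, and averaging over short $t$-intervals --- is the right toolkit when one only has a zero-free region whose width shrinks with $|t|$ (so that $\log L$ is not holomorphic in any fixed half-plane). Here the hypothesis gives a genuine zero-free half-plane, so $\log L$ is globally holomorphic for $\sigma>1/p$ and the Borel--Carath\'eodory step handles the lower bound for free. The ``main obstacle'' you anticipate never materialises; the averaging step is not needed at all.
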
 

\begin{proof}  Let $\sigma>1/p$. 
Applying the Borel-Carath\'eodory theorem for $\log L(s,\chi)$ which is holomorphic for $\sigma>1/p$, with a possible exception at $s=1$,
and  the concentric circumferences with centre at $2+i t$ and radii
$$
2-\frac{1}{p}-\frac{1}{2\log \log t} \ \ \ \mathrm{and} \ \ \ 2-\frac{1}{p}-\frac{1}{\log \log t}, \ \ \ t>t_1,
$$
where $t_1$ is chosen in such a way that the possible pole at $s=1$ of $L(s,\chi)$ is outside the circumferences and simultaneously 
the inequality $\Re (\log L(s,\chi))=\log |L(s,\chi)|< \log t$ holds. The latter holds because of Theorem \ref{ThIw}, via an application of the Phragm\'en-Lindel\"of convexity principle.  
Then in the smaller circumference 
\begin{eqnarray}
|\log L(s,\chi)|  &\leq  &((8-\frac{4}{p}) \log \log t -4))\log t\\
& &  + ((8-\frac{4}{p}) \log \log t -3) |\log L(2+it,\chi)| \nonumber\\
&<  &8(\log \log t)(\log t+|\log L(2+it,\chi)|). \label{M3}
\end{eqnarray}
Since $\log L(2+it,\chi)= O(1)$, then $\log L(2+it,\chi) \leq \log t$ for $t>t_2$,  and there exists a constant $A>2$ such that 
$$
|\log L(s,\chi)|\leq A(\log \log t)\log t, \ \ \ t>\max\{t_1,t_2\}.
$$
Let $s$ be such that $1/p+1/(\log \log t)\leq \sigma \leq 1$. Let us apply the Hadamard three circles theorem to $C_1$, $C_2$ e $C_3$ with centre $\log \log t +it$ which pass 
through the points $1+(1/\log \log t)+it$, $\sigma +it$ and $1/p+(1/\log \log t)+it$. Then the radii are
$$
r_1=\log \log t-1-(1/\log \log t),\  r_2=\log \log t-\sigma\  \mathrm{and}\ \ r_3=\log \log t-1/p-(1/\log \log t),
$$
respectively. 
Denote by $M_1$, $M_2$ and $M_3$ the maxima of $|\log L(s,\chi)| $ on $C_1$, $C_2$ and $C_3$. Then
$$
M_2\leq M_1^{1-a}M_3^a,
$$
where
\begin{eqnarray*}
a & =& \log{\frac{r_2}{r_1}}\big/\log{\frac{r_3}{r_1}}\\
 & = & \log \left(1+\frac{1+(1/\log \log t)-\sigma}{\log \log t-1-(1/\log \log t)}\right) \big/\log\left(1+\frac{1-1/p}{\log \log t-1-(1/\log \log t)}\right)\\
 & = & \frac{1-\sigma}{1-1/p}+O(1/\log \log t),  \ \ \ \ t>\max\{t_1,t_2, t_3\}.
\end{eqnarray*}

By (\ref{M3}) we have $M_3<A (\log \log t) \log t$. On the other hand, 
$$
M_1\leq \max_{x\geq 1+(1/\log \log t)}\Big|\sum_{n=2}^{\infty}\frac{\Lambda_1(n) \chi(n)}{n^{x+it}}\Big|  \leq \sum_{n=2}^{\infty}\frac{1}{n^{1+(1/\log \log t)}}<A\log \log t, 
$$
for $t>\max\{t_1,t_2, t_3,t_4\}$. 

Hence, 
$$
|\log L(\sigma+it,\chi)|<(A\log \log t)^{1-a}(A\log \log t)^{a}(\log t)^{a})=A\log \log t(\log t)^{\frac{1-\sigma}{1-1/p}},
$$
in the region
$1/p+1/(\log \log t)\leq \sigma \leq 1$ and $t>\max\{t_1,t_2, t_3,t_4\}$. 

Given  $\delta, \epsilon >0$, there is $t_5$ with 
$$1/(\log \log t)<\delta  \ \ \ \ \mbox{and} \ \ \ \  A\log \log t(\log t)^{\frac{1-p\sigma}{p-1}}<\epsilon, \ \ \ \ t>t_5.$$
Therefore,
$$-\epsilon \log t\leq \log |L(s,\chi)|\leq \epsilon \log t,  \ \ \ \ t>\max\{t_1,t_2, t_3,t_4\}.$$
Finally, setting  $t_0:=\max\{t_1,t_2, t_3,t_4,t_5\}$, we conclude that
$$
|L(s,\chi)|^{\pm1}\leq t^\epsilon
$$
in the region $\sigma \in [1/p+\delta, 1]$, $t>t_0$. 

 It is not difficult to observe that the same reasoning yields that the estimate
$$
|L(s,\overline{\chi})|^{\pm1}\leq t^\epsilon
$$
holds in the same region, where $\overline{\chi}$ is the conjugate character of $\chi$. Since $|L(\overline{s},\chi)|=|L(s,\overline{\chi})|$,  then
$$
|L(s,\chi)|^{\pm1}\leq |t|^\epsilon
$$
for  $\sigma \in [1/p+\delta, 1]$, $t>t_0$.
Applying the Phragm\'en-Lindel\"of convexity principle we obtain the desired result.
 \end{proof}

Next we formulate and prove a generalisation of another theorem of Littlewood \cite{Litt} (see Theorem 14.25(A) in \cite{Tit}):

\begin{lem} 
\label{lem2}
Suppose that $L(s,\chi)$ does not vanish for $\Re(s)>1/p$. Then the series  
$$\sum\frac{\mu(k)\chi(k)}{k^s}$$
converges to $1/L(s,\chi)$, for $\Re(s)>1/p$.
\end{lem}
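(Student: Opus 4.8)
The plan is to mimic Littlewood's classical argument (Theorem 14.25(A) in \cite{Tit}), replacing $\zeta$ by $L(s,\chi)$ and using the conditional estimates just established in Theorem \ref{Littl_Est}. First, note that the series $\sum \mu(k)\chi(k)k^{-s}$ is, formally, the Dirichlet series of $1/L(s,\chi)$, since $L(s,\chi)\sum\mu(k)\chi(k)k^{-s}=1$ as a formal identity (from the Euler product / Möbius inversion, valid for $\Re(s)>1$). So the content of the lemma is the \emph{convergence} of the series in the larger half-plane $\Re(s)>1/p$ under the no-zeros hypothesis, together with the fact that the sum is still $1/L(s,\chi)$ there. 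The standard device is to write the partial sum as a Perron-type contour integral and push the contour to the left of the line $\Re(s)=\sigma$ into the zero-free region, picking up the value $1/L(s,\chi)$ from the pole of the integrand at $w=0$.

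Concretely, I would fix $s$ with $\Re(s)=\sigma>1/p$ and apply Lemma \ref{LemTit} to the Dirichlet series $f(w)=1/L(w,\chi)=\sum\mu(k)\chi(k)k^{-w}$, which converges for $\Re(w)>1$ with coefficients $O(1)$, and with $\sum|\mu(k)\chi(k)|k^{-w}\ll 1/(\Re(w)-1)$ (so $\alpha=1$ in the notation of that lemma). This expresses $M_x(s):=\sum_{k<x}\mu(k)\chi(k)k^{-s}$ as $\frac{1}{2\pi i}\int_{c-iT}^{c+iT} \frac{1}{L(s+w,\chi)}\frac{x^w}{w}\,dw$ plus error terms that tend to $0$ as $T\to\infty$ (for fixed $x$ a non-integer). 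Next I would shift the vertical segment $\Re(w)=c$ to a vertical segment $\Re(w)=-\eta$ with $\eta>0$ small enough that $\sigma-\eta>1/p$, so that the whole shifted contour lies in the zero-free half-plane where $1/L$ is holomorphic. The only singularity crossed is the simple pole of $x^w/w$ at $w=0$, whose residue is exactly $1/L(s,\chi)$. The horizontal segments at $\Im(w)=\pm T$ and the new vertical segment are controlled using the bound $|1/L(s+w,\chi)|\ll |t\pm T|^{\epsilon}$ (resp.\ $|t|^\epsilon$ on the left edge) from Theorem \ref{Littl_Est}: on the horizontals the factor $1/|w|\le 1/T$ forces these contributions to $0$ as $T\to\infty$, and on the left vertical line the integral converges absolutely and is $O(x^{-\eta})\to 0$ as $x\to\infty$. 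Letting first $T\to\infty$ and then $x\to\infty$ (along non-integers, which suffices) yields $\lim_{x\to\infty} M_x(s)=1/L(s,\chi)$, which is the claimed convergence.

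The main obstacle, and the step requiring the most care, is making the contour shift rigorous: one must justify pushing the vertical line $\Re(w)=c>0$ (where the Dirichlet series converges absolutely) across the strip $1/p-\sigma<\Re(w)\le 1$ (where $1/L(s+w,\chi)$ is \emph{defined} by analytic continuation but the defining Dirichlet series need not converge) to $\Re(w)=-\eta$. This is legitimate because $1/L$ is holomorphic there by hypothesis and, by Theorem \ref{Littl_Est}, has at most polynomial (indeed $|t|^\epsilon$) growth in vertical strips, so Cauchy's theorem applies on the rectangle with vertices $c\pm iT$, $-\eta\pm iT$ and the horizontal pieces are negligible as $T\to\infty$. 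A secondary technical point is handling the possible pole of $L(s,\chi)$ at $s=1$ for principal $\chi$: there $1/L(s,\chi)$ has a zero, not a pole, so no extra residue arises and the argument goes through unchanged; one only needs to keep the point $w=1-s$ away from the contour, which is automatic since $\Re(1-s)=1-\sigma<1-1/p\le$ the relevant range only if $\sigma>1/p$, and in any case $1/L$ is holomorphic at $s=1$. Finally, one observes that the error terms in Lemma \ref{LemTit} all carry a factor $1/T$ or $1/(T|x-N|)$ and hence vanish in the iterated limit, completing the proof.
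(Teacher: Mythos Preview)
Your approach is the same as the paper's---apply Lemma~\ref{LemTit} to $f(w)=1/L(w,\chi)$, shift the contour into the zero-free strip, pick up the residue $1/L(s,\chi)$ at $w=0$, and estimate the three remaining segments via Theorem~\ref{Littl_Est}. The one genuine gap is your order of limits. You claim that on the left vertical line $\Re(w)=-\eta$ the integral
\[
\int_{-\eta-i\infty}^{-\eta+i\infty}\frac{1}{L(s+w,\chi)}\,\frac{x^{w}}{w}\,dw
\]
converges absolutely, and then let $T\to\infty$ before $x\to\infty$. But Theorem~\ref{Littl_Est} gives only $|1/L(s+w,\chi)|\ll |\Im w|^{\epsilon}$, so the integrand is of order $x^{-\eta}|\Im w|^{\epsilon-1}$, which is \emph{not} integrable over $\Im w\in\mathbb{R}$ for any $\epsilon>0$. (Even a uniform bound $1/L=O(1)$ would leave a divergent $\int dv/|v|$.) Hence the iterated limit $T\to\infty$ then $x\to\infty$ is not justified as written.

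The paper fixes this in the standard way: keep $T$ finite, so the left vertical contributes $O\bigl(x^{1/p-\sigma+\delta}\,T^{\epsilon}\bigr)$ and the horizontals contribute $O\bigl(T^{-1+\epsilon}x^{2}\bigr)$, and then \emph{couple} the parameters by taking $T=x^{3}$. All error terms (including the $O(x^{2}/T)$ from Lemma~\ref{LemTit}) then tend to zero as $x\to\infty$, provided $\epsilon$ is chosen small relative to $\sigma-1/p-\delta$. With that single adjustment your argument becomes the paper's proof verbatim.
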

\begin{proof} 
 Let $\Re(s)>1/p$. Applying Lemma \ref{LemTit} for 
 $$
 f(s)=\frac{1}{L(s,\chi)}=\sum_{k=1}^\infty \mu(k)\chi(k)k^{-s},
 $$
 $c=2$ and $x$ the half of an odd number, we obtain
\begin{eqnarray*}
\sum_{k<x} \frac{\mu(k)\chi(k)}{k^s} & = & \frac{1}{2 \pi i} \int_{2-iT}^{2+iT}\frac{1}{L(s+w,\chi)}\frac{x^w}{w}dw+O\left(\frac{x^2}{T} \right)\\
& = & \frac{1}{2 \pi i} \left( \int_{2-iT}^{1/p-\sigma+\delta-iT}+ \int_{1/p-\sigma+\delta-iT}^{1/p-\sigma+\delta+iT}+ \int_{1/p-\sigma+\delta+iT}^{2+iT}\right)\frac{1}{L(s+w,\chi)}\frac{x^w}{w}dw\\
\ & & +\frac{1}{L(s,\chi)}+O\left(\frac{x^2}{T} \right),
\end{eqnarray*}
with $0<\delta<\sigma-1/p$. 
By Theorem \ref{Littl_Est} the first and the third integrals can be estimated by
$$O\left( T^{-1+\epsilon} \int_{1/p-\sigma+\delta}^{2}x^udu\right)=O\left( T^{-1+\epsilon} x^2\right)$$
and the second one by
$$O\left( x^{1/p-\sigma+\delta} \int_{-T}^{T}(1+|t|)^{-1+\epsilon} dt\right)=O\left( x^{1/p-\sigma+\delta}T^{\epsilon}\right).$$
Hence, 
$$\sum_{k<x} \frac{\mu(k)\chi(k)}{k^s}= \frac{1}{L(s,\chi)}+O\left( T^{-1+\epsilon} x^2\right)+O\left( x^{1/p-\sigma+\delta}T^{\epsilon}\right).$$
Choosing $T=x^3$, the $O$-terms tend to zero when $x\to \infty$ which completes the proof. 
\end{proof}

Lemma \ref{lem2} and Theorem \ref{Littl_Est} yield:

\begin{lem}
\label{lema00}
Let $\delta, \epsilon>0$, $p\in (1, 2]$, with $1/p+\delta \leq 1$, and  $L(s,\chi)$ be a Dirichlet $L$-series which does not vanish in 
the semi-plane $\sigma>1/p$. Then
$$
\sum_{k=1}^n \frac{\mu(k)\chi(k)}{k^s}=O((1+|t|)^\epsilon)
$$
uniformly with respect to both $n\in \mathbb{N}$ and the strip $\Re(s) \in [1/p+\delta, 1]$.
\end{lem}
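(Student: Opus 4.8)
\textbf{Proof plan for Lemma \ref{lema00}.}

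The plan is to combine the two previous results in the obvious way, but to get \emph{uniformity in $n$} we must avoid using the mere convergence statement of Lemma \ref{lem2} pointwise and instead re-run the truncated Perron formula (Lemma \ref{LemTit}) and keep track of the partial sum. First I would fix $\delta,\epsilon>0$ with $1/p+\delta\le 1$ and work in the strip $\sigma\in[1/p+\delta,1]$. For $|t|\le t_0$ (the threshold from Theorem \ref{Littl_Est}) the claim is a finite-strip compactness matter that will be dealt with separately; so assume $|t|\ge t_0$. Apply Lemma \ref{LemTit} to $f(s)=1/L(s,\chi)=\sum\mu(k)\chi(k)k^{-s}$ with $c=2$, choosing $x$ to be a half-integer just above $n$ (so the sum $\sum_{k<x}$ is exactly $\sum_{k=1}^n$, with the non-integer/closest-integer hypotheses of Lemma \ref{LemTit} satisfied), and shift the contour to $\Re(w)=1/p-\sigma+\delta'$ for a suitable $0<\delta'<\sigma-1/p$, picking up the pole at $w=0$ with residue $1/L(s,\chi)$.

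Then I would estimate the three contour pieces exactly as in the proof of Lemma \ref{lem2}: the two horizontal segments contribute $O(T^{-1+\epsilon}x^2)$ and the vertical segment contributes $O(x^{1/p-\sigma+\delta'}T^{\epsilon})$, where the implied constants come from Theorem \ref{Littl_Est} and depend only on $p,\delta',\epsilon$ — crucially not on $n$. This yields
$$
\sum_{k=1}^n \frac{\mu(k)\chi(k)}{k^s}=\frac{1}{L(s,\chi)}+O\!\left(T^{-1+\epsilon}x^2\right)+O\!\left(x^{1/p-\sigma+\delta'}T^{\epsilon}\right).
$$
Now I would choose $T=T(x)$ as a fixed power of $x$, say $T=x^{3}$, so that both error terms are $O(x^{-c})$ for some $c=c(p,\delta',\epsilon)>0$ when $\sigma\ge 1/p+\delta$; since $x\asymp n$, these errors are uniformly bounded (indeed $o(1)$) in $n$. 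Finally, invoking Theorem \ref{Littl_Est} once more to bound $1/|L(s,\chi)|\le|t|^\epsilon$ on the strip for $|t|\ge t_0$, and bounding it by a constant for $t_0\le|t|$ replaced by the compact part, we get $\sum_{k=1}^n\mu(k)\chi(k)k^{-s}=O((1+|t|)^\epsilon)$ uniformly in $n$ and in $\sigma\in[1/p+\delta,1]$.

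The main obstacle is purely bookkeeping: one must verify that nothing in the estimates of Lemma \ref{LemTit} and Theorem \ref{Littl_Est} secretly depends on $n$ (it does not — $\psi$, $\alpha$, and the $O$-constants are attached to $f=1/L$, not to the truncation point), and one must handle the bounded range $|t|<t_0$ where Theorem \ref{Littl_Est} gives nothing. For that range, $1/L(s,\chi)$ is continuous and nonvanishing on the compact set $\{\sigma\in[1/p+\delta,1],\ |t|\le t_0\}$, hence bounded; combined with the $O(x^{-c})$ error (which is uniform there as well) this closes the argument. No new analytic input beyond Lemmas \ref{LemTit}, \ref{lem2} and Theorem \ref{Littl_Est} is needed.
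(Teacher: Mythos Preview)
Your proposal is correct and is precisely the argument the paper has in mind: the paper's own proof is the single sentence ``Lemma~\ref{lem2} and Theorem~\ref{Littl_Est} yield'', and what you have written is exactly the unpacking of that sentence---re-run the Perron/contour computation from the proof of Lemma~\ref{lem2}, note that none of the $O$--constants depend on the truncation point $x\asymp n$, choose $T=x^{3}$ so the error terms are uniformly $O(1)$, and then bound the main term $1/L(s,\chi)$ by Theorem~\ref{Littl_Est} (using compactness on the bounded-$t$ range). No alternative route is involved.
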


Finally we recall the following result about sums involving Dirichlet characters (see \cite[Theorem 6.17]{Apo}):
\begin{LEM}
\label{sums}
Let $\chi$ be a non-principal Dirichlet character and $f\in C^1[1,\infty)$ be a non-negative function which decreases in $[1,\infty)$. Then
$$
\sum_{k\leq x} \chi(k) f(k) = O(f(1)).
$$  
\end{LEM}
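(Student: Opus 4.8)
The plan is to reduce the estimate to the boundedness of the partial sums of $\chi$ and then invoke Abel's identity. First I would record that, since $\chi$ is a non-principal character modulo some $q$, the complete sum over a full period vanishes, $\sum_{k=1}^{q}\chi(k)=0$; consequently, for every $x\geq 1$ the partial sum
$$
A(x):=\sum_{k\leq x}\chi(k)
$$
reduces to a sum of fewer than $q$ terms, each of modulus at most $1$, so that $|A(x)|<q$. In particular $A(x)=O(1)$, with an implied constant depending only on the fixed character $\chi$.

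Next I would apply Abel's summation formula \cite[Theorem 4.2]{Apo}, exactly the tool already used in Section 2, to the function $f\in C^1[1,\infty)$. Using $A(t)=0$ for $t<1$, this gives, for every $x\geq 1$,
$$
\sum_{k\leq x}\chi(k)f(k)=A(x)f(x)-\int_1^x A(t)f'(t)\,dt .
$$
The boundary term satisfies $|A(x)f(x)|\leq q\,f(x)\leq q\,f(1)$, because $f$ is non-negative and non-increasing on $[1,\infty)$. For the integral, monotonicity gives $f'(t)\leq 0$, hence $|f'(t)|=-f'(t)$ and
$$
\left|\int_1^x A(t)f'(t)\,dt\right|\leq q\int_1^x\bigl(-f'(t)\bigr)\,dt=q\bigl(f(1)-f(x)\bigr)\leq q\,f(1).
$$
Adding the two estimates yields $\bigl|\sum_{k\leq x}\chi(k)f(k)\bigr|\leq 2q\,f(1)=O(f(1))$, which is the assertion.

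Every step here is elementary, so there is no substantial obstacle; the only points requiring a little care are the bookkeeping at the lower endpoint of Abel's identity (one uses that the summation starts at $k=1$ and that $A$ vanishes to the left of $1$) and the observation that the $O$-constant is permitted to depend on the modulus $q$, since the character $\chi$ is held fixed throughout. If a cleaner constant were desired one could replace the crude bound $|A(x)|<q$ by $|A(x)|\leq\varphi(q)$, or invoke the Pólya--Vinogradov inequality, but for our purposes boundedness of $A(x)$ is all that is needed.
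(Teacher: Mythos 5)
Your proof is correct and is exactly the standard argument: the paper itself gives no proof of this lemma but simply cites \cite[Theorem 6.17]{Apo}, whose proof is precisely your combination of the vanishing of $\sum_{k=1}^{q}\chi(k)$ for non-principal $\chi$ (hence bounded partial sums) with Abel summation against the non-negative decreasing $C^1$ function $f$. The bookkeeping at the lower endpoint and the dependence of the implied constant on $q$ are handled correctly, so nothing further is needed.
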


\subsection{Lubinsky's  Dirichlet orthogonal polynomials}\label{LubPol}
Recently Lubinsky \cite{LubJAT} considered the general Dirichlet polynomials built on the basis 
$\lambda_k^{-it}$, where $1= \lambda_1 < \lambda_2 < \lambda_3 < \cdots$, and provided an ingenious construction of 
the corresponding orthogonal basis with respect to the arctangent density.
For any such a strictly increasing sequence of real numbers $\lambda_n$, Lubinsky proved that  the general Dirichlet polynomials 
$\phi_1(t)=1$, $\phi_n(t)=(\lambda_n^{1-it} - \lambda_{n-1}^{1-it})/\sqrt{\lambda_n^{2} - \lambda_{n-1}^{2}}$, $n\geq 2$, satisfy 
$$
\int_\mathbb{R} \phi_n(t) \overline{\phi_m(t)} \frac{dt}{\pi (1+t^2)} = \delta_{nm},\ \ n,m \in \mathbb{N}, 
$$ 
and described the asymptotic behaviour of the corresponding kernel polynomaials. 
 The choice $\lambda_n=n^{1/p}$ and a simple change of variables shows that the Dirichlet polynomials
 \begin{eqnarray*}
\psi_{1}(t) & = & 1,\\ 
\psi_{n}(t) & = & \frac{n^{1/p-it} - (n-1)^{1/p-it}}{\sqrt{n^{2/p} - (n-1)^{2/p}}},\ \ \ n\geq 2
\end{eqnarray*} 
satisfy
$$
\frac{1}{p\, \pi} \int_\mathbb{R} \psi_{n}(t) \overline{\psi_{m}(t)} \frac{dt}{1/p^2+t^2} = \delta_{nm},\ \ n,m \in \mathbb{N}.
$$ 
Let 
$$
K_n(u,v) = \sum_{k=1}^n \psi_k(u) \overline{\psi_k(v)}
$$ 
be the corresponding kernel polynomials, which, according to \cite[(1.20), (1.19)]{LubJAT}, obey the following uniform asymptotic estimates in compact subsets of $\mathbb{R}$, as $n \rightarrow \infty$:
\begin{equation}
\label{LubEst1}
K_n(u,u) = \frac{p}{2} \, |1/p+i u|^2\, \log n\, (1+o(1))
\end{equation}
and 
\begin{equation}
\label{LubEst2}
|K_n(u,v)| \leq p\, \frac{|1/p+i u|\, |1/p-i v|}{|u-v|} + o(\log n).
\end{equation}

We shall need the following simple fact: 
\begin{lem}
\label{InvMat}
For every $m\in \mathbb{N}$ and for any distinct numbers $t_1,\ldots, t_m \in \mathbb{R}$, there exists $n(m)\in \mathbb{N}$ such that the self-adjoint matrix 
$H = \left( K_n(t_i,t_j) \right)_{i,j=1}^{\ m}$ 
is nonsingular for every $n>n(m)$. Moreover,
\begin{equation}
\label{Red}
\frac{\det H}{(\log n)^m}= \frac{p^m}{2^m} |1/p+i t_1|^2\dots |1/p+i t_m|^2+o(1)\ \ \mathrm{as}\ \ n \rightarrow \infty.
\end{equation}
\end{lem}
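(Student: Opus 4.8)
The plan is to write $H$ as a product that isolates the leading-order behaviour from the Lubinsky estimates \eqref{LubEst1}--\eqref{LubEst2}. Introduce the column vectors $v(t) = (\psi_1(t),\dots,\psi_n(t))^{\mathsf T} \in \mathbb{C}^n$, so that $K_n(t_i,t_j) = \langle v(t_j), v(t_i)\rangle = \overline{v(t_i)}^{\,\mathsf T} v(t_j)$. Then $H = V^{*} V$, where $V$ is the $n\times m$ matrix with columns $v(t_1),\dots,v(t_m)$. Hence $H$ is automatically positive semidefinite and self-adjoint, and it is nonsingular precisely when the vectors $v(t_1),\dots,v(t_m)$ are linearly independent in $\mathbb{C}^n$. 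So the first task is to show this independence for all large $n$.

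For the independence, suppose $\sum_{j=1}^m c_j v(t_j) = 0$ for some scalars $c_j$; equivalently the general Dirichlet polynomial $Q(t) := \sum_{j=1}^m c_j \overline{K_n(t_j, t)}$ — which lies in $\mathrm{span}\{\psi_1,\dots,\psi_n\}$ — has all its coordinates zero, hence $Q \equiv 0$. Evaluating the reproducing identity, one gets $\sum_j c_j K_n(t_j, t_i) = 0$ for every $i$, i.e. $Hc = 0$; so linear independence of the $v(t_j)$ is equivalent to $\det H \neq 0$, and it suffices to prove the asymptotic \eqref{Red}, since its right-hand side is a nonzero constant times $(\log n)^m$. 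Thus both assertions of the lemma reduce to the single computation of $\det H$ to leading order.

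To evaluate $\det H$, factor out the diagonal growth: by \eqref{LubEst1}, $K_n(t_i,t_i) = \tfrac{p}{2}|1/p + i t_i|^2 \log n\,(1+o(1))$, while by \eqref{LubEst2} the off-diagonal entries satisfy $K_n(t_i,t_j) = O(1) + o(\log n) = o(\log n)$ for $i \neq j$ (here the distinctness of the $t_j$ makes $1/|t_i - t_j|$ a bounded constant). Write $H = (\log n)\, D^{1/2}\bigl(I + E_n\bigr) D^{1/2}$ where $D = \mathrm{diag}\bigl(\tfrac{p}{2}|1/p+it_1|^2,\dots,\tfrac{p}{2}|1/p+it_m|^2\bigr)$ and $E_n$ is a self-adjoint $m\times m$ matrix whose diagonal entries are $o(1)$ and whose off-diagonal entries are $o(1)$ as well (each entry of $H$ divided by $\log n$ minus the corresponding entry of $D$, rescaled). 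Since $m$ is fixed, $\det(I + E_n) = 1 + o(1)$, and therefore
\[
\det H = (\log n)^m\,\det D\,\bigl(1 + o(1)\bigr) = (\log n)^m\,\frac{p^m}{2^m}\prod_{j=1}^m |1/p + i t_j|^2\,\bigl(1+o(1)\bigr),
\]
which is \eqref{Red}; in particular $\det H \neq 0$ once $n$ exceeds some $n(m)$.

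The only genuine subtlety — the step I would be most careful about — is the passage from the entrywise estimates \eqref{LubEst1}--\eqref{LubEst2} to $\det(I+E_n) = 1+o(1)$: one must check that dividing $H$ by $\log n$ really does send the off-diagonal part to $o(1)$ and not merely to $O(1)$. This is where \eqref{LubEst2} is used in the form $|K_n(t_i,t_j)| \le C(t_i,t_j) + o(\log n)$, so that $|K_n(t_i,t_j)|/\log n \to 0$; the constant $C(t_i,t_j) = p\,|1/p+it_i|\,|1/p-it_j|/|t_i-t_j|$ is finite exactly because the $t_j$ are pairwise distinct, which is precisely the hypothesis. Everything else is the standard continuity of the determinant on the fixed-size matrix space $M_m(\mathbb{C})$.
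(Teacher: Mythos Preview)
Your argument is correct and is essentially the paper's own proof: both use \eqref{LubEst1} and \eqref{LubEst2} to show that $H/(\log n)$ converges entrywise to the diagonal matrix $D=\mathrm{diag}\bigl(\tfrac{p}{2}|1/p+it_j|^2\bigr)$, and then pass to determinants. The paper does this via the Leibniz expansion (the identity permutation gives the main term, every other permutation contributes $o((\log n)^m)$), while you package the same computation as $H=(\log n)D^{1/2}(I+E_n)D^{1/2}$ with $E_n\to 0$ and invoke continuity of $\det$; the content is identical.
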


\begin{proof} The Leibniz formula for the expansion of the determinant $H$ over the permutations $\mathcal{P}_m$ and the above asymptotic formulae for the kernel 
polynomials yield  
\begin{eqnarray*}
\det H & = & \sum_{\sigma \in \mathcal{P}_m}  \mathrm{sgn} (\sigma) K_n(t_1,t_{\sigma(1)})\dots K_n(t_m,t_{\sigma(m)})\\
& = & \frac{p^m}{2^m}\, |1/p+i t_1|^2\dots |1/p+i t_m|^2 (\log n)^m(1+o(1))+O((\log n)^{m-2})\\
& = & \frac{p^m}{2^m}\, |1/p+i t_1|^2\dots |1/p+i t_m|^2 (\log n)^m+o((\log n)^m),
\end{eqnarray*}
which is equivalent to (\ref{Red}). Thus, obviously $H$ is nonsingular for all sufficiently large $n$.  
\end{proof}

\section{Proofs}

Our first result is a generalisation of the Nyman-Beurling criterion for a relatively wide class of Dirichlet series. 
\begin{thm}
\label{GNB} 
If $\eta \in \mathcal{D}$ then for every $p>1$, the following statements are equivalent:\\
(i) $\eta(s)$ does not vanish in the semi-plane  $\Re(s)>1/p$;\\
(ii) $C_\eta^p=L^p (0, 1)$;\\
(iii) The characteristic function $\textbf{1}_{[0,1]}$ belongs to $C_\eta^p$.
\end{thm}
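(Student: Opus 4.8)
I would organize the argument around the four implications (ii)$\Rightarrow$(iii), (iii)$\Rightarrow$(ii), (iii)$\Rightarrow$(i) and (i)$\Rightarrow$(iii); the first is trivial since $\textbf{1}_{[0,1]}\in L^p(0,1)=C_\eta^p$, and the core is the last one. For (iii)$\Rightarrow$(ii) the plan is to use the dilation operators $D_\nu$, $(D_\nu f)(x)=f(\nu x)$, for $\nu\ge1$: on functions supported in $(0,1)$ one has $\|D_\nu f\|_{L^p(0,1)}=\nu^{-1/p}\|f\|_{L^p(0,1)}$, so $D_\nu$ is a contraction of $L^p(0,1)$; since $D_\nu$ sends $\kappa_\eta(1/(\theta\,\cdot))$ to $\kappa_\eta(1/(\theta\nu\,\cdot))$ with $\theta\nu\ge1$ and multiplies the linear constraint $\beta\sum b_k\theta_k^{-\alpha}=0$ by $\nu^{-\alpha}$, we get $D_\nu(C_\eta)\subseteq C_\eta$ and hence $D_\nu(C_\eta^p)\subseteq C_\eta^p$. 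Thus, if $\textbf{1}_{[0,1]}\in C_\eta^p$ then $\textbf{1}_{[0,1/\nu]}=D_\nu\textbf{1}_{[0,1]}\in C_\eta^p$ for all $\nu\ge1$, so $C_\eta^p$ contains every $\textbf{1}_{(a,b]}$ with $0<a<b\le1$, hence every step function, hence all of $L^p(0,1)$.

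The implication (iii)$\Rightarrow$(i) I would prove contrapositively. If $\eta(\rho)=0$ for some $\rho$ with $\Re\rho>1/p$, the functional $\Phi_\rho(f)=\int_0^1 f(x)x^{\rho-1}\,dx$ is bounded on $L^p(0,1)$, because $x^{\rho-1}\in L^q(0,1)$ (with $\tfrac1p+\tfrac1q=1$) exactly when $\Re\rho>1/p$. By (\ref{Int1f}) one has $\Phi_\rho(h)=-\frac{\eta(\rho)}{\rho}\sum_k b_k\theta_k^{-\rho}=0$ for every $h\in C_\eta$, so $\Phi_\rho$ annihilates $C_\eta^p$; since $\Phi_\rho(\textbf{1}_{[0,1]})=1/\rho\neq0$, it follows that $\textbf{1}_{[0,1]}\notin C_\eta^p$.

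For the main implication (i)$\Rightarrow$(iii), assume $\eta$ has no zeros in $\Re s>1/p$. By duality it suffices to show that any $g\in L^q(0,1)$ with $\int_0^1 h(x)g(x)\,dx=0$ for all $h\in C_\eta$ satisfies $\int_0^1 g(x)\,dx=0$. Put $\Psi(s)=\int_0^1 g(x)x^{-s}\,dx=\mathcal{M}[g;1-s]$; this is holomorphic and uniformly bounded on each vertical line in $\Re s<1/p$, and $\Psi(s)\to0$ as $\Re s\to-\infty$. Combining (\ref{Int1f}) with the Mellin--Parseval formula $\int_0^\infty f_1f_2\,dx=\frac1{2\pi i}\int_{\Re s=c}\mathcal{M}f_1(s)\,\mathcal{M}f_2(1-s)\,ds$ ($0<c<1/p$, in the Fourier--Plancherel form underlying Section~2), the orthogonality of $g$ to $C_\eta$ becomes $\frac1{2\pi i}\int_{\Re s=c}W(s)Q(s)\,ds=0$, where $W(s)=\eta(s)\Psi(s)/s$ and $Q$ runs over the admissible Dirichlet polynomials $\sum_k b_k\theta_k^{-s}$, i.e.\ those with $\beta\sum_k b_k\theta_k^{-\alpha}=0$ --- a space spanned by $\theta^{-s}-\theta^{-\alpha}$, $\theta\ge1$. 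Hence $\frac1{2\pi i}\int_{\Re s=c}W(s)\theta^{-s}\,ds=\theta^{-\alpha}A_c$ for all $\theta\ge1$, with $A_c$ independent of $\theta$, and reading this on the line $\Re s=c$ as a statement about the Fourier transform in the vertical variable shows that, apart from the elementary contribution of the $\theta^{-\alpha}$ term (which reproduces precisely the possible simple pole of $\eta$ at $s=\alpha$), $t\mapsto W(c+it)$ is the boundary value of a function holomorphic in $\Re s>c$ with Hardy-class growth. Since $c\in(0,1/p)$ is arbitrary, $W$ continues meromorphically to $\Re s>0$ (with at most a simple pole at $s=\alpha$, cancelling that of $\eta$), so that, using the zero-freeness of $\eta$ in $\Re s>1/p$, the function $\Psi=sW/\eta$ continues holomorphically across $\Re s=1/p$; patching with its a priori domain $\Re s<1/p$ makes $\Psi$ entire.

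To conclude I would show this entire $\Psi$ vanishes identically, which yields $g=0$ and in particular $\int_0^1 g=\Psi(0)=0$. The boundedness of $\kappa_\eta$ gives $\eta(\sigma+it)=O_\sigma(|t|)$ on every line $\Re s=\sigma>0$, while a Littlewood-type estimate in the zero-free half-plane --- Borel--Carath\'eodory together with the Hadamard three-circles theorem applied to $\log\eta$, as in the proof of Theorem~\ref{Littl_Est} --- yields $|\eta(s)|^{-1}=O(|t|^{\epsilon})$ for $\Re s\ge1/p+\delta$; with the Hardy-class bound on $W$ this makes $\Psi$ of at most polynomial growth on each vertical line, so $\Psi$ has exponential type $0$, and since it is bounded on the lines $\Re s=\sigma<1/p$ and tends to $0$ as $\Re s\to-\infty$, the Phragm\'en--Lindel\"of principle (Lemma~\ref{PLP}) together with Liouville's theorem force $\Psi\equiv0$. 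The step I expect to be the main obstacle is precisely this implication (i)$\Rightarrow$(iii), and within it two points: (a) making the Mellin--Parseval identity and the Fourier/Hardy-space continuation rigorous uniformly for $p\in(1,\infty)$ --- for $p=2$ this is the $L^2$ Plancherel theorem, but for $p\neq2$ one has to work with weighted-$L^2$ (or $L^1$) versions of Parseval, most conveniently after first reducing to a dense class of smooth, compactly supported $g$; and (b) controlling $1/\eta$ just to the right of the critical line $\Re s=1/p$ that feeds the final Phragm\'en--Lindel\"of argument. The bookkeeping forced by the constraint $\beta\sum b_k\theta_k^{-\alpha}=0$ and by the possible simple pole of $\eta$ at $s=\alpha$ must also be carried along throughout.
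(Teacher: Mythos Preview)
Your treatment of (ii)$\Leftrightarrow$(iii) and of (iii)$\Rightarrow$(i) coincides with the paper's: the dilations $D_\nu$ are the paper's $T_{1/\nu}$ from Lemma~\ref{lema01}, and your contrapositive via the bounded functional $\Phi_\rho$ is exactly the H\"older estimate applied to (\ref{Int2f}).

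The implication (i)$\Rightarrow$(iii) is where you diverge. The paper does not attempt any Mellin--Parseval or Hardy-space continuation; it proves (i)$\Rightarrow$(ii) by contradiction through Beurling's semigroup lemma (Lemma~\ref{lema02}). Assuming $C_\eta^p\neq L^p(0,1)$, one picks a nonzero annihilator $g\in L^q$, shows (using a specific $h\in C_\eta$) that $g$ cannot vanish a.e.\ on any $(0,a)$, and then Lemma~\ref{lema02} produces a power $x^\lambda$ with $\Re\lambda>-1/q$ lying in the $L^r$-closure of $\{g(\gamma\,\cdot):0<\gamma\le1\}$ for every $r<q$. Because each $h\in C_\eta$ is bounded and because dilates of $g$ still annihilate $C_\eta$, one deduces $\int_0^1 x^\lambda h(x)\,dx=0$ for all $h\in C_\eta$; choosing $h(x)=\kappa_\eta(1/x)-\theta^\alpha\kappa_\eta(1/(\theta x))$ and invoking (\ref{Int1f}) forces $\eta(\lambda+1)=0$ with $\Re(\lambda+1)>1/p$, contradicting (i). This argument is uniform in $p>1$ and uses no Plancherel theorem, no Hardy spaces, and no estimate on $1/\eta$.

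Your route has a genuine gap exactly where you flag obstacle (a). The Mellin--Parseval identity on $\Re s=c$ is an $L^2$ statement; for $p\neq2$ the annihilator $g$ lies only in $L^q$, and on the line $\Re s=c$ the function $W(c+it)=\eta(c+it)\Psi(c+it)/(c+it)$ is merely bounded, not in $L^1$ or $L^2$, so the contour integral $\frac{1}{2\pi i}\int_{\Re s=c}W(s)\theta^{-s}\,ds$ has no a priori meaning and the Paley--Wiener step cannot start. Your proposed fix of ``reducing to smooth, compactly supported $g$'' does not work: $g$ is handed to you by Riesz representation as an arbitrary element of $(C_\eta)^\perp\subset L^q$, and you have no description of that annihilator allowing such a density reduction. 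Even granting the continuation of $W$, the inference ``polynomial growth on each vertical line $\Rightarrow$ exponential type $0$'' is false (e.g.\ $e^{s^2}$ is bounded on every vertical line yet has order $2$), so the final Phragm\'en--Lindel\"of/Liouville step is also incomplete. The paper's appeal to Lemma~\ref{lema02} is precisely the device that replaces the unknown $g\in L^q$ by the single explicit test function $x^\lambda$, after which everything is elementary.
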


Similar results were proved recently by Delaunay, Fricain, Mosaki and Robert \cite{DFMR13, DFMR13II} and de Roton \cite{Rot07}. 
Though the class of Dirichlet series considered in  \cite{DFMR13, DFMR13II} is wider than we deal with, the proof that we furnish is rather simpler. One of the
result in \cite{Rot07} contains the statement of Theorem \ref{GNB} but only for the particular case $p=2$.
We provide a proof of Theorem \ref{GNB} because it turns out to a be a clue tool for the remaining results that we establish in this note. 

We begin with a result which is analogous to an observation of Beurling \cite{Beu55} concerning the case  $C_\zeta$:

\begin{lem}\label{lema01} For every $\gamma \in (0,1]$, let $T_\gamma:L_p(0,1)\mapsto L_p(0,1)$ be the  operator defined by 
$$
T_\gamma f(x)=
\left\{  
\begin{array}{lcl} 
f(x/\gamma) & \mbox{if} & 0<x\leq \gamma, \\ 0 
& \mbox{if} & \gamma<x<1, 
\end{array} 
\right.
$$
and $T_0(f)$ is the identically zero function. Then $\| T_\gamma(f)\|_p \leq \|f\|_p$ and $T_\gamma(C_\eta)\subset C_\eta$.
\end{lem}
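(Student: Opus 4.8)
The plan is to treat the two assertions by elementary changes of variables, disposing of $\gamma=0$ as a trivial special case since then $T_0f\equiv 0$. For the norm bound, fix $\gamma\in(0,1]$ and compute directly, via the substitution $u=x/\gamma$,
$$
\|T_\gamma f\|_p^p=\int_0^\gamma|f(x/\gamma)|^p\,dx=\gamma\int_0^1|f(u)|^p\,du=\gamma\,\|f\|_p^p\le\|f\|_p^p,
$$
the last inequality because $\gamma\le1$; in particular $T_\gamma$ indeed maps $L_p(0,1)$ into itself, and $\|T_0f\|_p=0\le\|f\|_p$.

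For the inclusion $T_\gamma(C_\eta)\subset C_\eta$ with $\gamma\in(0,1]$, the one point that needs a moment's care is the identity $T_\gamma h(x)=h(x/\gamma)$ for a.e.\ $x\in(0,1)$, valid for every $h\in C_\eta$. This is the definition of $T_\gamma$ on $(0,\gamma]$, while on $(\gamma,1)$ one has $x/\gamma>1$, so $h(x/\gamma)=0$ by the already-noted fact that every element of $C_\eta$ vanishes identically for argument greater than $1$; this again matches the definition of $T_\gamma$ there. Granting this, writing $h(x)=\sum_{k=1}^n b_k\,\kappa_\eta(1/(\theta_k x))$ gives, for a.e.\ $x\in(0,1)$,
$$
T_\gamma h(x)=\sum_{k=1}^n b_k\,\kappa_\eta\!\left(\frac{1}{(\theta_k/\gamma)\,x}\right),
$$
so it remains only to verify that the new frequencies $\theta_k':=\theta_k/\gamma$ meet the two defining constraints of $C_\eta$. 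Since $\gamma\le1$ and $\theta_k\ge1$ we get $\theta_k'\ge1$; and
$$
\beta\sum_{k=1}^n\frac{b_k}{(\theta_k')^{\alpha}}=\gamma^{\alpha}\,\beta\sum_{k=1}^n\frac{b_k}{\theta_k^{\alpha}}=0.
$$
Hence $T_\gamma h$ coincides a.e.\ with an element of $C_\eta$, i.e.\ $T_\gamma h\in C_\eta$, and $T_0h\equiv0\in C_\eta$ as well.

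I do not expect a genuine obstacle here: the only subtle step is the a.e.\ identification $T_\gamma h=h(\,\cdot/\gamma)$ on all of $(0,1)$, which relies precisely on the support property of elements of $C_\eta$ recorded right after (\ref{Ceta}). Once that is in hand, the rescaling $\theta_k\mapsto\theta_k/\gamma$ does the rest, and the linear restriction $\beta\sum_k b_k/\theta_k^{\alpha}=0$ is preserved because it only picks up the harmless scalar factor $\gamma^{\alpha}$.
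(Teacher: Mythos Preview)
Your proof is correct. The paper does not actually supply a proof of this lemma; it is simply stated as ``analogous to an observation of Beurling'' and left to the reader. Your argument fills in precisely the expected details: the substitution $u=x/\gamma$ for the norm estimate, and for the invariance of $C_\eta$ the rescaling $\theta_k\mapsto\theta_k/\gamma$ together with the check that the linear constraint $\beta\sum_k b_k\theta_k^{-\alpha}=0$ is preserved up to the harmless factor $\gamma^\alpha$. One minor remark: on $(\gamma,1)$ the identity $T_\gamma h(x)=\sum_k b_k\,\kappa_\eta\bigl(1/(\theta_k' x)\bigr)$ in fact holds pointwise, not merely a.e., since $1/(\theta_k' x)<1$ there forces $\kappa_\eta(1/(\theta_k' x))=\beta(\theta_k' x)^{-\alpha}$ and the sum collapses to zero by the constraint; but the a.e.\ statement is of course enough in $L^p$.
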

and summarise the results in Beurling's paper \cite{Beu53} as follows: 
\begin{LEM}
\label{lema02} 
Let $g\in L_q(0,1)$, with $1<q<\infty$,  be such that 
$$\int_0^a|g(x)|dx>0,  \ \  \mathrm{for\ every}\  a\in(0,1).
$$
For $1\leq r< q$,  consider $E_g^r$, the closure in $L_r(0,1)$ of the linear space generated by  $\{g(\gamma x), \gamma \in (0,1]\}$. Then there exists $\lambda\in \mathbb{C}$ with  $\Re(\lambda)>-1/q$, such that  
$$
x^\lambda \in  \bigcap_{1\leq r<q} E_g^r.
$$
\end{LEM}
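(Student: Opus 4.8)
The plan is to follow the line of Beurling's 1953 argument \cite{Beu53}: recast the closure statement by duality, turn the one‑parameter family of dilations into a family of translations, and then run a Paley--Wiener/Hardy‑space argument on the Fourier side, using the non‑degeneracy hypothesis on $g$ to locate the exponent $\lambda$. Concretely, fix $r\in[1,q)$ with $1/r+1/r'=1$. By the Hahn--Banach theorem it suffices to produce a single $\lambda\in\C$ with $\Re(\lambda)>-1/q$, depending only on $g$, such that every $\psi\in L^{r'}(0,1)$ with $\int_0^1 g(\gamma x)\psi(x)\,dx=0$ for all $\gamma\in(0,1]$ also satisfies $\int_0^1 x^\lambda\psi(x)\,dx=0$; since $x^\lambda\in L^r(0,1)$ for every finite $r$ once $\Re(\lambda)>-1/q>-1$, such a $\lambda$ then lies in $\bigcap_{1\le r<q}E_g^r$.

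Next, substitute $x=e^{-u}$, $\gamma=e^{-c}$ and set $F(u)=g(e^{-u})e^{-u/q}$ and $P(u)=\psi(e^{-u})e^{-u/q'}$; both are supported on $u>0$, with $F\in L^q(\R_+)$ and $P\in L^{r'}(\R_+)\subset L^{q'}(\R_+)$. Then the dilates become the right translates $F(\cdot+c)$, $c\ge0$; the annihilation condition becomes $\int_0^\infty F(u+c)P(u)\,du=0$ for all $c\ge0$, i.e.\ the cross‑correlation $\Psi=F\ast P^\vee$ (with $P^\vee(u)=P(-u)$), which lies in $C_0(\R)$, is supported in $(-\infty,0]$; the target pairing becomes $\int_0^1 x^\lambda\psi(x)\,dx=\widehat P(\lambda+1/q)$, where $\widehat P(w)=\int_0^\infty P(u)e^{-wu}\,du$ is analytic for $\Re w>0$; and the hypothesis $\int_0^a|g|>0$ for every $a\in(0,1)$ says precisely that $F$ is not a.e.\ zero on any half‑line $(M,\infty)$, equivalently that the Laplace transform $\widehat F$ of $F$ is not identically zero.

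Now the Hardy‑space core. Since $F\in L^q(\R_+)$ with $q\le 2$ and $F$ is supported on $[0,\infty)$, Hausdorff--Young places $\widehat F$ in the Hardy class $H^{q'}$ of the corresponding half‑plane, where it admits an inner--outer factorization. Combining the support condition on $\Psi$ with the boundary identity $\widehat\Psi=\widehat F\,\widehat{P^\vee}$ on the critical line forces $\widehat P$ to vanish (or acquire a singular inner factor) at a point $w_0$ with $\Re w_0>0$ that is governed by the inner part of $\widehat F$ (equivalently of the Mellin transform of $g$), and not by $\psi$; roughly, were $\widehat{P^\vee}$ purely outer one could divide to make $\widehat F$ analytic in both half‑planes in a Hardy class, forcing $\widehat F\equiv0$ and hence $g\equiv0$ near the origin, contradicting the hypothesis. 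Taking $\lambda=w_0-1/q$ gives $\Re(\lambda)>-1/q$ and $\widehat P(\lambda+1/q)=\widehat P(w_0)=0$, i.e.\ $\psi\perp x^\lambda$, with $\lambda$ depending on $g$ alone; this completes the reduction.

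The main obstacle is exactly this last step, and above all the claim that the forced zero can be localized at a point depending only on $g$: one must fix the correct Hardy classes for $\widehat F$, $\widehat P$ and $\widehat\Psi$ — the asymmetry $q\le2\le q'$ is what makes this delicate, and is also the reason one needs the slightly stronger integrability $\psi\in L^{r'}$ with $r'>q'$, rather than merely $\psi\in L^{q'}$, whence the restriction to $r<q$ — control the inner--outer factorizations, and justify the Fubini interchanges at the borderline of integrability. This analytic core is the substance of Beurling's theorem \cite{Beu53}; with it in hand (see also the related treatments in \cite{DFMR13, DFMR13II, Rot07}) the duality reduction above yields the lemma.
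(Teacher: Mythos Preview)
The paper does not prove this lemma; it is introduced with the phrase ``summarise the results in Beurling's paper \cite{Beu53} as follows'' and stated without proof, so there is no in-paper argument to compare against. Your sketch follows the standard architecture of Beurling's approach---Hahn--Banach duality, the substitution $x=e^{-u}$ converting dilations on $(0,1)$ into translations on $(0,\infty)$, and a Hardy-space/Paley--Wiener analysis of the Laplace transforms---and you are explicit that the decisive step (producing a single $w_0$ with $\Re w_0>0$, depending only on $g$, at which every annihilator's transform must vanish) is the substance of \cite{Beu53} itself rather than something you establish. In that sense your proposal is not so much an alternative proof as a more detailed version of the citation the paper already makes.

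One slip worth correcting: the assertion that ``$F$ is not a.e.\ zero on any half-line $(M,\infty)$'' is \emph{equivalent} to ``$\widehat F\not\equiv 0$'' is false. Any nonzero $F\in L^q(\R_+)$, even one with compact support, has nonzero Laplace transform; the unbounded-support condition is strictly stronger, and it is the one Beurling's argument actually uses (it prevents the translation-invariant subspace from sitting inside functions of bounded support, which is what forces the spectrum to be nonempty in the right half-plane). Your heuristic that $w_0$ arises from ``the inner part of $\widehat F$'' is also only literally a zero when that inner part has a Blaschke factor; in the purely singular case one must argue via the spectral hull rather than a pointwise zero, and your parenthetical ``or acquire a singular inner factor at a point $w_0$'' does not quite capture this. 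These are precisely the technicalities you defer to \cite{Beu53}.
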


\vspace{.2in}

 \textit{Proof of Theorem \ref{GNB}.} First we prove that $(ii)$ and $(iii)$ are equivalent. It is clear that $(ii)$ implies $(iii)$. Suppose that $\textbf{1}_{[0,1]}\in C_\eta^p$.  For every $g\in L_p(0,1)$ and any $\epsilon>0$ there exists 
 a partition $0=\gamma_0<\gamma_1<\dots<\gamma_n=1$ of the do interval $(0,1)$ and constants $a_1,\dots,a_n$, not all equal to zero, such that 
$$
\|g-\sum_{k=1}^{n}a_k \textbf{1}_{(\gamma_{k-1},\gamma_{k})}\|_p<\frac{\epsilon}{2}.
$$
Since $\textbf{1}_{(0,1)}\in C_\eta^p$ there is $h\in C_\eta$ with 
$$
\|\textbf{1}_{(0,1)}-h\|_p<\frac{\epsilon}{4n\max\{|a_k|\}}.
$$
Let us choose  $f=\sum_{k=1}^n a_k(T_{\gamma_k}-T_{\gamma_{k-1}})h$. Lemma \ref{lema01} implies that $f\in C_\eta$ and 
\begin{eqnarray*}
\|g-f\|_p & = & \|g-\sum_{k=1}^n a_k(T_{\gamma_k}-T_{\gamma_{k-1}})h\|_p\\
 & = & \|g-\sum_{k=1}^{n} a_k \textbf{1}_{(\gamma_{k-1},\gamma_{k})} +\sum_{k=1}^{n} a_k \textbf{1}_{(\gamma_{k-1},\gamma_{k})} - \sum_{k=1}^n a_k (T_{\gamma_k}-T_{\gamma_{k-1}}) h\|_p\\
& = & \|g-\sum_{k=1}^{n}a_k \textbf{1}_{(\gamma_{k-1},\gamma_{k})} +\sum_{k=1}^{n}  a_k (T_{\gamma_k}-T_{\gamma_{k-1}}) (\textbf{1}_{(0,1)} -h) \|_p\\
& \leq & \|g-\sum_{k=1}^{n}a_k \textbf{1}_{(\gamma_{k-1},\gamma_{k})} \|_p+\sum_{k=1}^{n}  |a_k|\, \| (T_{\gamma_k}-T_{\gamma_{k-1}}) (\textbf{1}_{(0,1)} -h)\|_p\\
& < & \epsilon.
\end{eqnarray*}
Next we prove that (iii) implies (i).  Let 
$$
h(x) = \sum_{k=1}^n b_k\, \kappa_\eta (1/\theta_k x) \in C_\eta.
$$
By (\ref{Int1f})
\begin{equation}
\label{Int2f}
\int_0^1(1+h(t))t^{s-1}dt=\frac{1-\eta(s)\sum_{k=1}^{n}b_k\theta_k^{-s}}{s}, \ \  \Re(s)>0.
\end{equation}
Suppose that $\textbf{1}_{(0,1)}\in C_\eta^p$. Then, given $\epsilon>0$, there exists $h \in C_\eta$ such that $\|\textbf{1}_{(0,1)} + h \|_p<\epsilon$. It is clear that  $x^{s-1}\in L_q(0,1)$ provided $1/p+1/q=1$ and $\Re(s)>1/p$. 
Furthermore,
$$
\| x^{s-1} \|_q^q = \frac{1}{q(\Re(s)-1/p)}.
$$
Applying H\"older's inequality to (\ref{Int2f}) we obtain 
$$
|1-\eta(s)\sum_{k=1}^{n}b_k\theta_k^{-s}|^q < \epsilon^q \frac{|s|^q}{q(\Re(s)-1/p)}.
$$
Let us assume that $\eta$ possesses a zero $\rho$ in the semi-plane $\Re(s)>1/p$. Letting $\epsilon \to 0$ in the latter inequality we obtain an obvious contradiction. Therefore $\eta(s)$ does not vanish in  $\Re(s)>1/p$.

Finally we prove that (i) implies (ii). If $C_\eta$ is not dense in $L_p (0, 1)$, that is $C_\eta^p\neq L_p (0, 1)$, theh by the Riesz representation theorem, there is $g\in L_q(0,1)$, such that $g$ is a nonzero element of $L_q(0,1)$ and
$$
\int_0^1g(x)h(x)dx=0 \ \ \mathrm{for\ all}\  h \in C_\eta.
$$
With the aid of the operator $T_\gamma$, introduced in Lemma \ref{lema01}, we conclude that
\begin{equation}
\label{auxiliar}
\int_0^1g(x)\, T_\gamma h (x)dx=\gamma \int_0^1g(\gamma x) h(x)dx=0, \ \ \mathrm{for\ every}\  h\in C_\eta.
\end{equation}
In order to apply Lemma \ref{lema02} to the function $g$ we need to prove that 
$$
\int_0^a|g(x)|dx>0,  \ \ \ \mathrm{for\ every}\  a\in(0,1),
$$
or equivalently, that $g$ is not zero almost everywhere in $(0,a)$, for every $a\in(0,1)$. Suppose the contrary, that $g\equiv0$ a.e. in $(0,a)$ for some $a\in (0,1)$. 
Choose $b$, such that $a<b<\min (1,2a)$ and set 
$$
h(x)=b^\alpha\, \kappa_\eta \left(\frac{a}{x}\right)-a^\alpha\, \kappa_\eta\left(\frac{b}{x}\right).
$$ 
It is obvious that $h\in C_\eta$ and it vanishes for $x>b$. Let $a_1$ be the first coefficient in the representation of $\eta$ which defines of the latter $h$.
Recall that, by definition $a_1\neq 0$. Moreover, $h$ takes the value $a_1 a^\alpha$ in $x\in(a,b)$. 
Therefore,
$$
0= \int_0^1g(x) h(x)dx=a_1a^A\int_a^b g(x) dx, \ \ \ a<b<\min (1,2a),
$$
which implies that $g= 0$ almost everywhere in $(0,\min \{1,2a\})$. Substituting $a$ by $2a,4a, \dots$ we conclude that $g=0$ almost everywhere in $(0,1)$. This is a contradiction with the fact that $g$ is a nonzero element of $L_p(0,1)$. 
Thus,
$$
\int_0^a|g(x)|dx>0,  \ \  \mathrm{for\ every}\  a\in(0,1).
$$
For $1\leq r< q$,  consider $E_g^r$, the closure in $L_r(0,1)$ of the linear space generated by  $\{g(\gamma x),\  \gamma \in (0,1]\}$.
By Lemma \ref{lema02} there exists a function $x^\lambda$, with $\Re(\lambda)>-1/q$ and $x^\lambda\in  \cap_{1\leq r<q} E_g^r$. We shall prove that
$$
\int_0^1x^\lambda h(x)dx=0 \ \  \mathrm{for\ all}\  h\in C_\eta.
$$
For each $h\in C_\eta$ there is $M>0$ such that $|h(x)|<M$ when $x\in (0,1)$. Since  $x^\lambda \in E_g^1$, then for every $\epsilon>0$ there exist exist $\gamma_1,\dots,\gamma_n \in (0,1]$ with $\|x^\lambda-\sum_{k=1}^ng(\gamma_kx)\|_1<\epsilon/M$. 
The latter observation and (\ref{auxiliar}) yield
\begin{eqnarray*}
|\int_0^1x^\lambda h(x)dx| & = & |\int_0^1\left(x^\lambda-\sum_{k=1}^ng(\gamma_kx)+\sum_{k=1}^ng(\gamma_kx) \right)h(x)dx|\\
& = & |\int_0^1\left(x^\lambda-\sum_{k=1}^ng(\gamma_kx) \right)h(x)dx|\\
& \leq & L \int_0^1\left|x^\lambda-\sum_{k=1}^ng(\gamma_kx) \right| dx\\
&< & \epsilon.
\end{eqnarray*}
Hence
$$
\int_0^1x^\lambda h(x)dx=0, \ \  \mathrm{for\ all}\  h\in C_\eta.
$$
In particular, choosing $h(x)=\kappa_\eta(1/x)-\theta^\alpha \kappa_\eta(1/\theta x)$, with $\theta\in [1,\infty)$, in such a way that $\theta^{\alpha-\lambda -1}-1\neq 0$,  in view of (\ref{Int1f})  we obtain
$$
0=\int_0^1x^\lambda \left(\kappa_\eta\left(\frac{1}{x}\right)-\theta^\alpha \kappa_\eta\left(\frac{1}{\theta x}\right)\right)dx=\frac{\theta^{\alpha-\lambda -1}-1}{\lambda+1}\eta(\lambda+1).
$$
Therefore $\eta(\lambda+1)=0$ for $\Re(1+\lambda)>1-1/q=1/p$, that is, $\eta$ possesses a zero $s$ with  $\Re(s)>1/p$. This contradicts (i).
\ \ \ \ \  $\Box$

\vspace{.2in}

 \textit{Proof of Theorem \ref{GBNew}.}   Let $p\in(1,2]$ and $L(s,\chi)$ be a Dirichlet $L$-series with character modulo $q$. We define the functions
 $$
\eta(s)=L(s+1/p-1/2,\chi)=\sum_{k=1}^\infty \frac{\chi(k)}{k^{1/p-1/2}}\frac{1}{k^s}
$$ 
and
$$
\kappa_\eta(x)=\beta \, x^\alpha-\sum_{k\leq x}\chi(k)k^{1/2-1/p},
$$
with  $\alpha=3/2-1/p$ and $\beta= \varphi(q)/(\alpha q)$ if  $\chi$ is principal and with 
$ \alpha=\beta=0$ if  $\chi$ is a non-principal character. We claim that $\eta \in \mathcal{D}$.  For the principal character we use the Abel identity for the function $f(x)=x^{-1/p+1/2}$ to obtain
\begin{eqnarray}
\sum_{k\leq x} \frac{\chi(k)}{k^{1/p-1/2}} &=&-\left(\frac{\varphi(q)}{q}x-\sum_{k\leq x}\chi(k)\right)\frac{1}{x^{1/p-1/2}}+\frac{\varphi(q)}{q}\nonumber\\
& &-(1/p-1/2)\int_1^x \left(\frac{\varphi(q)}{q}t-\sum_{k\leq t}\chi(k)\right)\frac{1}{t^{1/p+1/2}}dt \label{Abel_id}\\
& &+ \frac{\varphi(q)}{q}\int_1^x \frac{1}{t^{1/p-1/2}}dt.\nonumber
\end{eqnarray}
Observe that for the principal character modulo $q$ the function 
$$\frac{\varphi(q)}{q}x-\sum_{k\leq x}\chi(k)$$
is bounded because it is periodic with period $q$. Therefore
$$
\sum_{k\leq x} \frac{\chi(k)}{k^{1/p-1/2}}=\frac{\varphi(q)}{q(3/2-1/p)}x^{3/2-1/p}+O(1)
$$
because the right-hand side of (\ref{Abel_id}) is dominated by its last term while the remaining ones are bounded. Hence 
$$
\kappa_\eta(x) = \frac{\varphi(q)}{q(3/2-1/p)} x^{3/2-1/p} - \sum_{k\leq x} \frac{\chi(k)}{k^{1/p-1/2}}
$$
isa bounded. Thus $\eta \in \mathcal{D}$ with $\alpha=3/2-1/p$ and $\beta= \varphi(q)/(\alpha q)$. For a non-principal character  we can use Lemma \ref{sums} with $f(x)=x^{-1/p+1/2}$ obtaining  
$$\sum_{k\leq x} \frac{\chi(k)}{k^{1/p-1/2}}=O(1)$$
which shows that $\eta \in \mathcal{D}$ with $\alpha=\beta=0$.

In order to prove the first statement of the theorem we need  to show that $\mathbf{1}_{(0,1)}$ belongs to the closure in $L^2(0,\infty)$ of the set $\mathrm{span} \{\kappa_\eta(1/kx): k \in \N \}$. First we show that $\mathbf{1}_{(0,1)} \in C_{\eta,\N}^2$. 
The proof of this part of the theorem depends on the type of the character.  Suppose first that $\chi$ is principal. Our proof is inspired by the ingenious idea developed by Bagchi \cite{Bag} for his proof of  the B\'aez-Duarte criterion for the Riemann zeta function.

If $L(s,\chi)$ does not vanish for $\Re(s)>1/p$ then $\eta(s)$ does not vanish for $\Re(s)>1/2$. But $\eta \in \mathcal{D}$ and by Theorem \ref{GNB} the characteristic function $\textbf{1}_{(0,1)}$ belongs to $C_\eta^2$. Having in mind that that $\beta\neq 0$, 
it is not difficult to observe that  $C_{\eta,\N}=\mbox{span} \{\kappa_\eta(1/kx)-(1/k^\alpha)\kappa_\eta(1/x):k \in \N\}$. For any $\theta\geq 1$, let us consider the Mellin  transform of $\kappa_\eta(1/\theta x)-\theta^{-\alpha}\kappa_\eta(1/x)$. By (\ref{Mellin}),
$$
\mathcal M\left[\kappa_\eta(1/\theta x)-\theta^{-\alpha}\kappa_\eta(1/x); s\right]=-\frac{\eta(s)}{s}(\theta^{-s}-\theta^{-\alpha}).
$$
Recall again that, by Plancherel's theorem the Mellin transform $\mathcal{M}$ can be extended to an isometry $(1/\sqrt{2 \pi})\mathcal{M}: L^2 (0,\infty) \to L^2(\Re(s)=1/2)$. Therefore, in order to prove that $\textbf{1}_{(0,1)}\in C_{\eta,\N}^2$ it suffices to establish the claim that $\mathcal M[\mathbf{1}_{(0,1)};s]=1/s$ belongs to the closure of the 
set $\mbox{span} \{ -\frac{\eta(s)}{s}(k^{-s}-k^{-\alpha}) :k \in \N\}$ in the space $L^2(\Re(s)=1/2)$. 

For each $n\in \N$ and $\epsilon\in (0,1-1/p)$ we define the function 
$H_{n,\epsilon} \in L^2(\Re(s)=1/2)$ by
$$
H_{n,\epsilon}=\sum_{k=1}^{n}\frac{\mu_{\eta}(k)}{k^\epsilon}G_k,
$$
where 
$\mu_{\eta}(k)$ are the coefficients of the expansion of $1/\eta$ in a formal Dirichlet series and 
$$
G_k(s)=(k^{-s}-k^{-\alpha}) \frac{\eta(s)}{s}.
$$ 
Observe that $H_{n,\epsilon} \in \mbox{span}\{G_k:k\in \N\}$ and 
$$
H_{n,\epsilon}(s)=\frac{\eta(s)}{s}\left( \sum_{k=1}^{n}\frac{\mu_\eta(k)}{k^{s+\epsilon}}- \sum_{k=1}^{n}\frac{\mu_\eta(k)}{k^{\alpha+\epsilon}}\right),\ \ \ \ s\in \Re(s)=1/2.
$$
The Dirichlet $L$-series $L(s,\chi)$ does not vanish when $\Re(s)>1/p$ (hypothesis). 
Hence, by Lemma \ref{lem2}, 
$$
\lim_{n\to\infty} H_{n,\epsilon}(s)=H_\epsilon(s),   \ \ \ \ \Re(s)=1/2,
$$
with
\begin{eqnarray*}
H_\epsilon(s) & = & \frac{\eta(s)}{s}\left(\frac{1}{\eta(s+\epsilon)}-\frac{1}{\eta(\alpha+\epsilon)} \right)\\
\ & = &\frac{L(s+1/p-1/2,\chi)}{s}\left(\frac{1}{L(s+1/p-1/2+\epsilon,\chi)}-\frac{1}{L(1+\epsilon,\chi)} \right).
\end{eqnarray*}
It follows from Theorem \ref{Littl_Est} and Lemma \ref{lema00} about estimates of Dirichlet $L$-series  that the modulus of $H_{n,\epsilon}$ is bounded by a function from $L^2(\Re(s)=1/2)$. 
Hence, by Lebesgue's dominated convergence theorem, for every fixed $\epsilon>0$, 
$$\lim_{n\to \infty} H_{n,\epsilon}=H_\epsilon,$$
in the $L^2(\Re(s)=1/2)$ norm. Since $H_{n,\epsilon} \in \mbox{span}\{G_k:k\in \N\}$ for every fixed $\epsilon>0$, then $H_{\epsilon}$ belongs to the closure of  $\mbox{span}\{G_k:k\in \N\}$. 
The function $L(s,\chi)$ has a pole in $s=1$, so that, by the definition of $H_\epsilon(s) $, 
$$
\lim_{\epsilon \to 0} H_\epsilon(s)=\frac{1}{s}=E(s), \ \ \ \ s\in \Re(s)=1/2.
$$
Thus, in order to prove that $E$ belongs to the closure of  $\mbox{span}\{G_k:k\in \N\}$, it suffices to show that the modulus of $H_\epsilon$, $0<\epsilon<1-1/p$, is uniformly bounded along the critical  line by 
a function from $L^2(\Re(s)=1/2)$. Lebesgue's dominated convergence theorem yields that
$$\lim_{\epsilon \to 0}H_\epsilon=E$$
in the norm of $L^2(\Re(s)=1/2)$.  
The Hadamard factorisation of $\xi_{\chi}(s)$ in Theorem \ref{ThIw} implies that there is a positive constant $c_0$ such that 
$$
\left|   \frac{\xi_{\chi}(s+1/p-1/2)}{\xi_{\chi}(s+1/p-1/2+\epsilon)}\right|<c_0,  \ \ \Re(s)=1/2,  \ \ \ \epsilon \in (0,1-1/p).
$$
The definition of $\xi_{\chi}$ in (\ref{xiL}), Stirling's formula about the asymptotic behaviour of the Gamma function and the restriction on $\epsilon$  yield 
\begin{eqnarray*}
\left|\frac{L(s+1/p-1/2,\chi)}{L(s+1/p-1/2+\epsilon,\chi)}\right| & < & c_0\, (q\pi)^{\epsilon/2}\frac{(s+1/p-1/2+\epsilon)(s+1/p-1/2+\epsilon-1)}{s+1/p-1/2(s+1/p-1/2-1)}\\
& & \ \times  \frac{\Gamma((s+1/p-1/2+\epsilon+\alpha(\chi))/2)}{\Gamma((s+1/p-1/2+\alpha(\chi))/2)}\\
& < & c_1\, t^{1/4}.
\end{eqnarray*}
This estimate, the ones in Theorem \ref{ThIw}  and Theorem \ref{Littl_Est}, yield the desired estimate by an $L^2(\Re(s)=1/2)$ function:
$$
| H_\epsilon(s) | = O ( t^{-3/4}),\ \ \Re(s)=1/2.
$$

The proof for the case of non-principal  character goes along similar reasonings with the only difference that in this situation we choice $G_k$ to be
$$
G_k(s)=k^{-s} \frac{\eta(s)}{s}.
$$ 

    We have already established that $\mathbf{1}_{(0,1)} \in C_{\eta,\N}^2$ which implies 
$$
\lim_{n\to \infty} \inf_{{b_1,\ldots,b_n\in \C}\atop}\int_0^1 \left|\textbf{1}_{(0,1)}-\sum_{k=1}^n b_k \kappa_\eta \left(\frac{1}{kx}\right) \right|^2 dx=0,
$$
where $b_1, \ldots, b_n$ obey the additional restriction $\beta\sum_{k=1}^n b_k k^{-\alpha}=0$. This restriction implies   
$$
\lim_{n\to \infty} \inf_{{b_1,\ldots,b_n\in \C}\atop}\int_0^\infty \left|\textbf{1}_{(0,1)}-\sum_{k=1}^n b_k \kappa_\eta \left(\frac{1}{kx}\right) \right|^2 dx=0,
$$
which obviously yields
$$
\lim_{n\to \infty} d_n(L,p)=0.
$$

The proof of the other statement of the theorem needs to be done for the principal character and for non-principal characters separately too. 
For the principal character, suppose that $\lim_{n\to \infty} d_n(L,p)=0$. This means that for any given $\epsilon>0$ there exist complex numbers $b_1,\ldots, b_n$, such that  $d_n^2(\eta,2)<\epsilon$, or equivalently,  there exists $\tilde{h}(x)=\sum_{k=1}^n b_k \kappa_\eta(1/(kx))$, such that 
$\| \mathbf{1}_{(0,1)} - \tilde{h} \|_{L^2[0,\infty)} < \epsilon$. By the definition of $\kappa_\eta$, it is clear that 
\begin{equation}
\label{1-f}
\| \mathbf{1}_{(0,1)} - \tilde{h} \|_{L^2(0,\infty)} \leq \beta \left|\sum_{k=1}^n \frac{b_k}{k^\alpha} \right| \left(\int_1^\infty x^{-2\alpha} dx \right)^{1/2}
\end{equation} 
and  $x^{-2\alpha}$ is integrable because $2\alpha=3-2/p>1$. 
Consider the function 
$$
h(x) = \tilde{h}(x) -  \left( \sum_{k=1}^n \frac{b_k}{k^\alpha} \right) \kappa_\eta\left(\frac{1}{x}\right).
$$
Obviously $h \in C_\eta$, that is, it satisfies the lats restriction in (\ref{Ceta}). Moreover, 
\begin{eqnarray*}
\| h - \tilde{h} \|_{L^2(0,\infty)} & = & \left|\sum_{k=1}^n \frac{b_k}{k^\alpha} \right| \left(\int_0^\infty \left| \kappa_\eta\left(\frac{1}{x}\right) \right|^2 dx \right)^{1/2}\\
& = & \left|\sum_{k=1}^n \frac{b_k}{k^\alpha} \right| \left( \int_0^1 \left| \kappa_\eta\left(\frac{1}{x}\right) \right|^2 dx + \beta \int_1^\infty x^{-2\alpha} dx     \right)^{1/2},
\end{eqnarray*} 
where the first integral is well defined because $\kappa_\eta(1/x)$ is bounded in $(0,1)$. It follows then from (\ref{1-f})  that, with a suitable constant $K$,  
$$
\| h - \tilde{h} \|_{L^2(0,\infty)} \leq K\, \| \mathbf{1}_{(0,1)} - \tilde{h} \|_{L^2(0,\infty)} \leq K \epsilon.
$$
Then
$$
\| \mathbf{1}_{(0,1)} - h \|_{L^2(0,\infty)} \leq  \| \mathbf{1}_{(0,1)} - \tilde{h} \|_{L^2(0,\infty)} + \| h - \tilde{h} \|_{L^2(0,\infty)}  \leq  (1+K)\, \epsilon.
$$
Therefore, $\mathbf{1}_{(0,1)}\in C_\eta^2$  and by Theorem \ref{GNB} the function $\eta(s)$ does not vanish for $\Re(s)>1/2$ or equivalently $L(s,\chi)$ have no zeros in $\Re(s)>1/p$. 

The proof in the case of a non-principal character is immediate. \ \ \ \ \ \ \ \ \ \ \ $\Box$

\vspace{.2in}

{\em Proof of Theorem} \ref{GBLp}. First we observe that the previous theorem and an application of Mellin's transform 
implies that for every $L(s,\chi)$ the corresponding function $\eta(s)=L(s+1/p-1/2,\chi)$ is free of zeros in  $\Re(s)>1/2$ if and only if 
$\lim d_n(\eta,2)\rightarrow 0$ as $n\rightarrow \infty$, where 
$$
d_n^2(\eta,2)=\inf_{{A_n\in \mathcal{D}_n}\atop} \frac{1}{2\pi} \int_{\Re(s)=1/2}\left| \frac{1-\eta(s)A_n(s)}{s} \right|^2 |ds|,
$$
and the infimum is taken over the Dirichlet polynomials $A_n$ of degree $n$, $A_n(s) = \sum_{k=1}^{n} b_k k^{-s}$. But
\begin{eqnarray*}
d_n^2(\eta,2) & = & \inf_{ {A}_n \in \mathcal{D}_n\atop} \frac{1}{2\pi} \int_{-\infty}^\infty \left| \frac{1-L(1/p+it,\chi)A_n(1/2+it)}{1/2+it} \right|^2 dt\\
\ & = & \inf_{A_n \in \mathcal{D}_n\atop } \frac{1}{2\pi} \int_{-\infty}^\infty   \left| \frac{1/p+it}{1/2+it} \right|^2    \left| \frac{1-L(1/p+it,\chi)A_n(1/p+it)}{1/p+it} \right|^2 dt.
\end{eqnarray*}
Since obviously $1<|(1/p+it)/(1/2+it)|<2$, then 
$$
d_n(L,p)<d_n(\eta,2)<2d_n(L,p)$$
and the result follows.\ \ \ \ \ $\Box$

\vspace{.2in}

{\em Proof of Theorem \ref{DnL}.}  Let $m\in \N$, $t_1, \ldots, t_m$ be distinct real numbers and $n\in \N$. Let us consider the Lubinsky Dirichlet orthogonal polynomials $(\psi_k)_{k=1}^n$ defined in Section \ref{LubPol}. Observe that $\mathcal{D}_{n,m}^p$ is nonempty 
since it contains $B_{n,p}(s)\equiv1$.  If $B_{n,p} \in \mathcal{D}_{n,m}^p$ then $B_n(1/p+it_j)=1$, $j=1,\dots,m$ and $B_n(1/p+it)=\sum_{k=1}^{n}b_k\psi_k(t)$. The  interpolations conditions can be rewritten in the form  
$$
A_{mn}\textbf{B}=\mathbf{1}_m, 
$$
where
$$
A = A_{mn}=\left(\begin{array}{cccc}

\psi_1(t_1) & \psi_2(t_1) & \dots & \psi_n(t_1)\\

\psi_1(t_2) & \psi_2(t_2) & \dots & \psi_n(t_2)\\

\vdots                    &  \vdots                   & & \vdots \\

\psi_1(t_m) & \psi_2(t_m) & \dots & \psi_n(t_m)

\end{array}\right),
$$
$\textbf{B}=(b_1,\ldots, b_{n})^T$ and $\mathbf{1}_m$ is the column vector of size $m$ all of whose entries are equal to one.
Then obviously 
$$
\frac{1}{p\, \pi} \|B_n(1/p+it) \|_{L^2(\R,\omega)}^2= |b_1|^2+|b_2|^2+\dots+|b_n|^2,  
$$
where $L^2(\R,\omega)$ is the weighted $L^2$ space with weight $\omega(t)= 1/(1/p^2+t^2)$. Thus the problem reduces to minimize $\|\textbf{B}\|^2$, $\textbf{B}\in \mathbb{C}^n$, subject to
$A_{mn}\textbf{B}=\mathbf{1}_m$. It is well known that the solution of the latter problem is given via a projection (see \cite[Theorem 2.19]{Bre} and \cite{GreRos, Nev}). In our setting, it is equivalent to solve the system 
\begin{eqnarray*}
A\,\textbf{B} & = & \mathbf{1}_m,\\
\textbf{B} & = & A^\ast \lambda,\ \ \lambda \in \mathbb{C}^m.
\end{eqnarray*}
By Lemma \ref{InvMat} there exists $n(m)\in \mathbb{N}$ such that the self-adjoint matrix $A A^*=H = \left( K_n(t_i,t_j) \right)_{i,j=1}^{\ m}$ is nonsingular for every $n>n(m)$. Hence, for $n>n(m)$, the system of matricial equations has an unique solution  $\tilde{\textbf{B}} = A^\ast H^{-1} \mathbf{1}_m$. Therefore, since  $A\, A^\ast = H$ and $H$ is self-adjoint, then
$$
\frac{1}{p\, \pi}  \|\tilde{B}_n(1/p+it) \|_{L^2(\R,\omega)}^2=   \tilde{\textbf{B}}^\ast \tilde{\textbf{B}} =  \mathbf{1}_m^\ast (H^{-1})^\ast  A\, A^\ast H^{-1} \mathbf{1}_m = \mathbf{1}_m^\ast H^{-1} \mathbf{1}_m. 
$$
Thus, by the Cramer formula for the inverse matrix
$$
\frac{1}{p\, \pi}  \|\tilde{B}_n(1/p+it) \|_{L^2(\R,\omega)}^2 = \sum_{i,j=1}^m (-1)^{i+j} \frac{\det H_{ij}}{\det H},
$$
where $H_{ij}$ are the $(i,j)$-th cofactors of $H$. 
On the other hand, the asymptotic relations (\ref{LubEst1}) and (\ref{LubEst2}) and Lemma  \ref{InvMat}  yield the following ones, as $n \rightarrow \infty$:
\begin{eqnarray*}
\det H_{jj} & \sim & (\log n)^{m-1}\, \frac{p^{m-1}}{2^{m-1}} \frac{|1/p+i t_1|^2\dots |1/p+i t_m|^2}{|1/p+i t_j|^2}\\
\det H_{ij} & = & O((\log n)^{m-2}),\ \ i\neq j\\
\det H & \sim &  (\log n)^m \frac{p^m}{2^m} |1/p+i t_1|^2\dots |1/p+i t_m|^2.
\end{eqnarray*}
Hence, 
$$
\frac{1}{p\, \pi}  \|\tilde{B}_n(1/p+it) \|_{L^2(\R,\omega)}^2 \sim \frac{2}{p \log n} \sum_{j=1}^m  \frac{1}{1/p^2+t_j^2},\ \ \mathrm{as}\ \ n\rightarrow \infty,
$$
which is equivalent to 
$$
d^2_{n,m,p}\sim \frac{1}{\log n} \sum_{j=1}^m  \frac{1}{1/p^2+t_j^2},\ \ \mathrm{as}\ \ n\rightarrow \infty.
$$
$\Box$


\begin{thebibliography}{99}

\bibitem{Apo}  
T. M. Apostol, Introduction to Analytic Number Theory, Springer-Verlag, New York,  1976.



\bibitem{B-D93}
  L. B\'aez-Duarte, On Beurling's real variable reformulation of the Riemann hypothesis, \textit{Adv. Math.} \textbf{101} (1993), 10--30.
  
  \bibitem{B-D99}
  L. B\'aez-Duarte, A class of invariant unitary operators, \textit{Adv. Math.} \textbf{144} (1999), 1--12.
  
  \bibitem{B-D03}
        L. B\'aez-Duarte, 
        A strengthening of the Nyman-Beurling criterion for the Riemann hypothesis, 
        \textit{Atti Acad. Naz. Lincei Rend. Lincei Mat. Appl.} \textbf{14} (2003) 5--11.
        
 \bibitem{B-D05}
  L. B\'aez-Duarte, A general strong Nyman-Beurling criterion for the Riemann hypothesis, \textit{Publ. Inst. Math.} \textbf{78(92)} (2005), 117--125.



\bibitem{BBLS}
        L. B\'aez-Duarte, M. Balazard, B. Landreau and E. Saias, 
        Notes sur la fonction $\zeta$ de Riemann, 3,  \textit{Adv. Math.} \textbf{149} (2000) 130--144.
        
         \bibitem{Bag}  
B. Bagchi. On Nyman, Beurling and Baez-Duarte's Hilbert space reformulation of the Riemann hypothesis,  \textit{Proc. Indian Acad. Sci. Math.}  \textbf{116} (2006), 137--146.



\bibitem{BS}
  M. Balazard et E. Saias, Notes sur la fonction $\zeta$ de Riemann, 1, \textit{Adv. Math.} \textbf{139} (1998), 310--321.
  

\bibitem{BalRot} 
M. Balazard and  A. de Roton, Sur un crit\`ere de B\`aez-Duarte pour l'hypoth\`ese de Riemann,  \textit{Int. J. Number Theory} \textbf{6} (2010), 883--903.



  \bibitem{BerFoi}
  H. Bercovici and C. Foias, A real variable restatement of Riemann's hypothesis, \textit{Israel J. Maths.} \textbf{48} (1984), 57--68.
  
  \bibitem{BetConFar}
S. Bettin, J. B. Conrey and D. W. Farmer, An optimal choice of Dirichlet polynomials
for the Nyman--Beurling criterion, Proc. Steklov Inst. Math. 280 (2013), 38-44. 


  \bibitem{Beu53}  
A. Beurling, A Theorem on Functions Defined on a Semi-group,  \textit{Math. Scand.} Vol. 1, 1953.
 
\bibitem{Beu55}
       A. Beurling, 
       A closure problem related to the Riemann zeta-function, 
       \textit{Proc. Nat. Acad. Sci} \textbf{41} (1955) 312--314.
       

\bibitem{Bre}
H. Brezis, \textit{Functional Analysis, Sobolev Spaces and Partial Differential Equations}, Springer, New York, 2011. 

       
  
\bibitem{Bur}  
J. F. Burnol, A lower bound in an approximation problem involving the zeroes of the Riemann zeta function, \textit{Adv. in Math.} \textbf{170} (2002), 56--70.





       
 \bibitem{DFMR13}
C. Delaunay, E. Fricain, E. Mosaki and O. Robert, 
Zero-free regions for Dirichlet series, 
 \textit{Trans. Amer. Math. Soc.} \textbf{365} (2013) 3227--3253.


\bibitem{DFMR13II} C. Delaunay, E. Fricain, E. Mosaki and O. Robert,  Zero-free regions for Dirichlet series (II), Math. Z. 273 (2013), 999--1023.


 \bibitem{Rot05} 
A. de Roton, G\'en\'eralisation du crit\`ere de Beurling et Nyman pour l\'hypoth\`ese de Riemann,  \textit{C. R. Math. Acad. Sci. Paris } \textbf{340} (2005), 191--194.

\bibitem{Rot06} 
A. de Roton, Une approche hilbertienne de l'hypoth\`ese de Riemann g\'en\'eralis\'ee, \textit{Bull. Soc. Math. France} \textbf{134} (2006), 417--445

\bibitem{Rot07}  
A. de Roton. G\'en\'eralisation du crit\`ere de Beurling-Nyman pour l'hypoth\`ese de Riemann g\'en\'eralis\'ee, \textit{Trans. Amer. Math. Soc.} \textbf{12} (2007), 6111--6126.

\bibitem{Rot09} 
A. de Roton, Une approche s\'equentielle de l'hypoth\`ese de Riemann g\'en\'eralis\'ee, \textit{J. Number Theory.} \textbf{129} (2009), 2647--2658. 
  
 \bibitem{GreRos}  
U. Grenander and M. Rosenblatt, An extension of a theorem of G. Szeg\H{o} and its application
to the study of stochastic processes, \textit{Trans. Amer. Math. Soc.}  \textbf{76} (1954), 112--126.


\bibitem{Ivic}       
       A. Ivi\'c, The Riemann Zeta-function, John Wiley and Sons, New York, 1985.


\bibitem{IwaKow}  
H. Iwaniec and E. Kowalski, Analytic Number Theory, Amer. Math. Soc. Colloq. Publ., Vol. 53,  Amer. Math. Soc., Providence, RI, 2004.

\bibitem{Litt}
J. E. Littlewood, Quelques cons\'equences de l'hypoth\`ese que la fonction $\zeta(s)$  Riemann n'a pas de z\'eros dans le demiplan $\Re(s)>1/2$,  
\textit{C. R. Acad. Sci., Paris} \textbf{153} (1912), 263--266.

\bibitem{LubJAT}
D. S. Lubinsky, Orthogonal Dirichlet polynomials with arctangent density, \textit{J. Approx. Theory} \textbf{177} (2014), 43--56.


\bibitem{Nev} 
P. Nevai,  On an extremal problem, \textit{Proc. Amer. Math. Soc.} \textbf{74} (1979), 301--306.

  \bibitem{Nik}
  N. Nikolski, Distance formulae and invariant subspaces, with an application to
localization of zeros of the Riemann $\zeta$-function, \textit{Ann. Inst. Fourier.} \textbf{45} (1995), 143--159.


\bibitem{Nym}
        B. Nyman, 
        \textit{On Some Groups and Semigroups of Translation}, 
        Thesis, Uppsala, 1950.

\bibitem{Tit}
        E. C. Titchmarsh, 
        \textit{The Theory of the Riemann Zeta--function}, 2nd ed. (revised by D. R. HeathÐBrown), 
        Oxford Univ. Press, Oxford, 1986.  
        
  \bibitem{Vas}  
  V. I. Vasyunin, On a biorthogonal system related with the Riemann hypothesis, \textit{St. Petersburg Math. J.} \textbf{7} (1996), 405--419.

      
 
\end{thebibliography}
\end{document}